\documentclass[a4paper]{amsart}
\usepackage{amsmath,amsthm,amssymb,amsfonts,mathrsfs,mathtools,color,hyperref, mathtools,crop, graphicx, enumitem}
\usepackage[top=3.5cm, bottom=3cm, left=2.6cm, right = 2.6cm, marginparwidth = 2cm]{geometry}
\usepackage{todonotes}
\usepackage{color}

\theoremstyle{plain}
\begingroup

\newtheorem{theorem}{Theorem}[section]
\newtheorem*{theorem*}{Theorem}
\newtheorem{corollary}[theorem]{Corollary}

\newtheorem{lemma}[theorem]{Lemma}
\endgroup

\theoremstyle{definition}
\begingroup

\endgroup

\theoremstyle{remark}
\begingroup
\newtheorem{remark}[theorem]{Remark}
\newtheorem{example}[theorem]{Example}
\endgroup 

\numberwithin{equation}{section}
 \setcounter{tocdepth}{2}
 \setcounter{secnumdepth}{3}

\newcommand{\R}{\mathbb R}

\renewcommand{\div}{\operatorname{div}}
\newcommand{\wto}{\stackrel*\rightharpoonup}
\newcommand{\ol}{\overline}
\newcommand{\E}{{\mathcal E}}
\newcommand{\W}{{\mathcal W}}

\renewcommand{\L}{{\mathcal L}}

\newcommand{\F}{{\mathcal F}}

\renewcommand{\H}{{\mathcal H}}
\newcommand{\spt}{\operatorname{spt}}
\newcommand{\inv}{^{-1}}
\newcommand{\cc}{\Subset}

\newcommand{\eps}{\varepsilon}
\newcommand{\id}{\mathrm{id}}

\newcommand{\dx}{\,\mathrm{d}x}
\renewcommand{\d}{\,\mathrm{d}}
 \newcommand{\dy}{\,\mathrm{d}y}
  \newcommand{\dz}{\,\mathrm{d}z}
  \newcommand{\supp}{\mathrm{supp}}

\begin{document}

\title[On the Boundary Regularity of Phase-Fields]{On the Boundary Regularity of Phase-Fields for Willmore's Energy}

\author{Patrick W.~Dondl}
\address{Patrick W.~Dondl\\
Abteilung f\"ur Angewandte Mathematik\\
Albert-Ludwigs-Universit\"at Freiburg\\
Hermann-Herder-Str.~10\\
79104 Freiburg i.~Br.\\
Germany \\
Phone: +49 761 203-5642\\
Fax: +49 761 203-5644}
\email{patrick.dondl@mathematik.uni-freiburg.de}

\author{Stephan Wojtowytsch}
\address{Stephan Wojtowytsch\\Department of Mathematical Sciences\\Durham University\\Durham DH1\,1PT, United Kingdom}
\email{s.j.wojtowytsch@durham.ac.uk}

\date{\today}

\subjclass[2010]{35J67; 49Q20; 49Q10; 49N60; 35J15; 35J25}
\keywords{Willmore energy, phase field, boundary regularity}

\begin{abstract}
We demonstrate that Radon measures which arise as the limit of the Modica-Mortola measures associated to phase-fields with uniformly bounded diffuse area and Willmore energy may be singular at the boundary of a domain and discuss implications for practical applications. We furthermore give partial regularity results for the phase-fields $u_\eps$ at the boundary in terms of boundary conditions and counterexamples without boundary conditions.
\end{abstract}

\maketitle

\section{Introduction}

Phase-field approximations provide a convenient way of treating curvature energies numerically. Typically, the phase-field problem is more stable numerically than the potentially highly non-linear original problem. A classical example of a curvature energy is the Willmore functional 
\[
\W(\Sigma) = \int_\Sigma H^2\d\H^{n-1}
\]
where $\Sigma\subset\R^{n}$ is a hypersurface, $H$ denotes its mean curvature and $\H^k$ the $k$-dimensional Hausdorff measure. The same functional on plane curves is also sometimes referred to as Euler's elastica. 

There are several distinct phase-field approximations of Willmore's energy \cite{bretin2013phase}. The model we will use in the following is due to Bellettini and Paolini \cite{bellettini:1993vg}, based on a functional proposed by De Giorgi \cite[Conjecture 4]{degiorgi:1991jc}. 

Let $\Omega\cc\R^n$ and $W$ be the double-well potential $W(u) = \frac14\,(u^2-1)^2$. Then we consider the Modica-Mortola energy \cite{modica:1987us,MR0445362}
\begin{equation*}
 S_\eps\colon L^1(\Omega)\to \R, \quad S_\eps(u) = \begin{cases}\frac1{c_0}\int_\Omega \frac\eps2\, |\nabla u|^2 + \frac1\eps\,W(u)\dx &u\in W^{1,2}(\Omega)\\ +\infty&\text{else}\end{cases}
\end{equation*}
as an approximation of the perimeter functional and 
\begin{equation*}
\W_\eps\colon L^1(\Omega)\to\R, \quad \W_\eps(u) = \begin{cases}\frac1{c_0\,\eps}\int_\Omega\,\left(\eps\,\Delta u - \frac1\eps\,W'(u)\right)^2\dx&u\in W^{2,2}(\Omega)\\ +\infty &\text{else}\end{cases}
\end{equation*}
as an approximation of Willmore's energy, where $c_0 = \int_{-1}^1\sqrt{2\,W(s)}\:ds = 2\sqrt{2}/3$ is a normalising constant. As proved in \cite{roger:2006ta}, the sum of the functionals satisfies
\begin{equation*}
\left[\Gamma(L^1(\Omega))-\lim_{\eps\to 0}\,(\W_\eps + \Lambda\,S_\eps)\right]\,(\chi_E - \chi_{\Omega\setminus E})\: =\: \W(\partial E) + \Lambda\,\H^{n-1}(\partial E)
\end{equation*}
for any $\Lambda>0$ if $E\cc \Omega$ and $\partial E\in C^2$ in low dimension $n=2,3$. Consider a general sequence $u_\eps$ such that
\[
\limsup_{\eps\to 0}(S_\eps + \W_\eps)(u_\eps) < \infty.
\]
Then the diffuse area measures 
\[
\mu_\eps := \frac1{c_0}\left(\frac\eps2 \,|\nabla u_\eps|^2 + \frac1\eps \,W(u_\eps)\right)\cdot\L^n
\]
which localise the diffuse perimeter functional $S_\eps$ and the diffuse Willmore measures
\[
\alpha_\eps := \frac1{c_0\,\eps }\left(\eps\,\Delta u_\eps - \frac{W'(u_\eps)}\eps\right)\cdot\L^n
\]
which localise the functionals $\W_\eps$ have weak limits $\mu$ and $\alpha$ in the sense of Radon measures, at least for a suitable subsequence. Due to \cite{roger:2006ta}, $\mu$ is the mass measure of an integral $(n-1)$-varifold $V$ in $\Omega$ with square integrable mean curvature and 
\begin{equation}\label{eq willmore alpha}
|H_\mu|^2\cdot\mu \leq \alpha.
\end{equation}
In this article, we will show among other things that the relationship \eqref{eq willmore alpha} is only valid {\em inside} $\Omega$ and that $\mu$ may be very irregular on $\partial\Omega$ if the boundary values of the phase-fields $u_\eps$ are not controlled. The choice of boundary values corresponds to a modelling assumption. In \cite{MR3590663}, we have investigated thin elastic structures in a bounded container, where the natural boundary condition is
\begin{equation}\label{eq strict boundary}
u_\eps \equiv -1, \quad\partial_\nu u_\eps \equiv 0\quad\text{on }\partial\Omega\qquad\text{ or in simpler terms}\quad u_\eps \in -1 + W^{2,2}_0(\Omega)
\end{equation}
to express that the structures are confined to $\Omega$ and only touch the boundary tangentially. Another interesting boundary condition is 
\begin{equation}\label{eq neumann boundary}
\partial_\nu u_\eps \equiv 0 \quad\text{ on }\partial \Omega
\end{equation}
which expresses that the level sets of $u_\eps$ can only meet $\partial\Omega$ at a right angle. This approximates the minimisation problem explored in \cite{alessandroni2014local}. Another possible boundary condition is 
\begin{equation}\label{eq weak boundary}
u_\eps \equiv 1 \text{ on }\Gamma_+, \qquad u_\eps \equiv -1 \text{ on } \Gamma_-, \qquad u_\eps \text{ free on }\partial\Omega\setminus \Gamma_+ \cup \Gamma_-
\end{equation}
which prescribes a phase transition inside $\Omega$ but leaves the particular nature of the transition free. It is clear that any regularity result for $\mu$ or the functions $u_\eps$ inside $\Omega$ can be extended to $\ol \Omega$ under the boundary conditions \eqref{eq strict boundary}, since $u_\eps$ can be extended to the whole space $\R^n$ as a constant function without changing the energy
\[
\E_\eps(u_\eps) := (\W_\eps + S_\eps)(u_\eps).
\]

On the other hand, the regularity of $u_\eps$ and $\mu$ under the boundary values \eqref{eq neumann boundary} or \eqref{eq weak boundary} is less obvious. Furthermore, not specifying boundary values can simplify proofs significantly when local results are considered, see for example \cite[Corollary 2.15]{DW_conv}. In this article, we extend regularity results for the phase-fields $u_\eps$ from \cite{DW_conv,MR3590663}. Our main results are the following.

\begin{theorem}\label{thm:main1}
Let $\Omega \cc\R^n$ for $n=2,3$. Then the following hold true.

\begin{enumerate}
\item Assume that $u_\eps \in C^0(\ol\Omega)$ is uniformly bounded in $L^\infty(\partial\Omega)$. Then $u_\eps$ is uniformly bounded in $L^\infty(\Omega)$ if $n=2$ and in $L^p(\Omega)$ for all $p<\infty$ if $n=3$.

\item Assume that $\partial\Omega\in C^2$ and $\partial_\nu u_\eps \equiv 0$ on $\partial\Omega$ for all $\eps>0$. Then $u_\eps$ is uniformly bounded in $L^\infty(\Omega)$ and 
\[
|u_\eps(x) - u_\eps(y)| \leq \frac{C}{\eps^\gamma}\,|x-y|^\gamma \qquad \forall\ x\in \Omega, y\in B_\eps(x)\cap \Omega
\]
with $\gamma <1$ if $n=2$ and $\gamma\leq 1/2$ if $n=3$. The constant $C$ depends on $n,\gamma, \Omega$ and $\limsup_{\eps\to 0}\E_\eps(u_\eps)$.

\item If either condition is given and $u_\eps\to u$ in $L^1(\Omega)$, then $u_\eps\to u$ in $L^p(\Omega)$ for all $1\leq p <\infty$.
\end{enumerate}
\end{theorem}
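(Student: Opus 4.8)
The standing hypothesis $\limsup_{\eps\to 0}\E_\eps(u_\eps)<\infty$ supplies two quantitative facts I would isolate first. The diffuse Willmore bound reads $\frac1{c_0\eps}\int_\Omega w_\eps^2\dx\le C$ with $w_\eps:=\eps\,\Delta u_\eps-\frac1\eps W'(u_\eps)$, so $\|w_\eps\|_{L^2(\Omega)}^2\le C\,c_0\,\eps$; the Modica--Mortola bound gives $\int_\Omega W(u_\eps)\dx\le C\,c_0\,\eps$, whence $\int_\Omega u_\eps^4\dx$ is bounded uniformly in $\eps$ because $W(u)\ge\frac1{16}u^4$ for $|u|$ large. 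Since $u_\eps\in W^{2,2}(\Omega)$ it solves, weakly, the elliptic equation $\eps\,\Delta u_\eps=\frac1\eps W'(u_\eps)+w_\eps$ with forcing small in $L^2$. The whole difficulty is to turn this into $\eps$-uniform bounds: a naive bootstrap through Calder\'on--Zygmund pays a factor $\eps^{-2}$ from the coefficient of $W'$, so one must instead keep the \emph{defocusing} term $W'(u_\eps)$, whose sign is favourable for large $|u_\eps|$, on the coercive side of every estimate.

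For (1) I set $M_0:=\sup_\eps\|u_\eps\|_{L^\infty(\partial\Omega)}<\infty$ and fix a level $M\ge\max(M_0,2)$. The decisive point is that $(u_\eps-M)_+$ vanishes on $\partial\Omega$ — here the continuity up to the boundary and the uniform boundary bound are used — so it is admissible in $W^{1,2}_0(\Omega)$ and integration by parts produces no boundary term. Testing the equation against $(u_\eps-M)_+$, and then against its powers $(u_\eps-M)_+^{q-1}$, yields
\[
\eps\,\|\nabla(u_\eps-M)_+\|_{L^2}^2+\tfrac1\eps\int_\Omega W'(u_\eps)\,(u_\eps-M)_+\dx\le\|w_\eps\|_{L^2}\,\|(u_\eps-M)_+\|_{L^2},
\]
with both left-hand terms nonnegative since $W'>0$ on $\{u_\eps>M\}$. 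Feeding this into a Stampacchia/Moser iteration, with the Sobolev embedding $W^{1,2}_0(\Omega)\hookrightarrow L^{2^*}$ driving the gain of integrability, controls the super-level sets; the symmetric test function $(-u_\eps-M)_+$ controls $\inf_\Omega u_\eps$. In $n=2$ the embedding holds for every exponent and the iteration closes to a uniform $L^\infty$ bound; in $n=3$ the borderline exponent $2^*=6$ lets it reach $L^p$ for every finite $p$ with a constant degenerating as $p\to\infty$, exactly the stated dichotomy.

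For (2) the Neumann condition and $\partial\Omega\in C^2$ let me drop the restriction $M\ge M_0$ and, more importantly, obtain regularity up to $\partial\Omega$. Reflecting $u_\eps$ evenly across $\partial\Omega$ in boundary normal coordinates (legitimate because $\partial_\nu u_\eps\equiv0$) extends it to a $W^{2,2}$ function on a neighbourhood of $\ol\Omega$ solving an equation of the same structure with bounded measurable coefficients; equivalently one invokes conormal boundary Calder\'on--Zygmund and De Giorgi--Nash estimates directly. The testing of (1) now runs for all levels and gives the uniform $L^\infty$ bound in both dimensions. For the Hölder estimate I rescale $x\mapsto\eps y$, $\tilde u(y):=u_\eps(\eps y)$, turning the equation into $\Delta\tilde u=W'(\tilde u)+\eps\,\tilde w$ with right-hand side in $L^2$ on unit balls; once $\tilde u$ is bounded one has $W'(\tilde u)\in L^2_{\mathrm{loc}}$, so Calder\'on--Zygmund gives $\tilde u\in W^{2,2}_{\mathrm{loc}}\hookrightarrow C^{0,\,2-n/2}$, i.e. $\gamma<1$ for $n=2$ and $\gamma\le\frac12$ for $n=3$. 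Undoing the scaling converts $[\tilde u]_{C^{0,\gamma}}\le C$ into $[u_\eps]_{C^{0,\gamma}}\le C\eps^{-\gamma}$, the claimed modulus of continuity on $B_\eps(x)$.

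Part (3) follows by interpolation: writing $q=\infty$ for $n=2$ and any finite $q>p$ for $n=3$, the uniform bound from (1)/(2) together with $\|u_\eps-u\|_{L^1}\to0$ gives $\|u_\eps-u\|_{L^p}\le\|u_\eps-u\|_{L^1}^{\theta}\,\|u_\eps-u\|_{L^q}^{1-\theta}\to0$, where $\frac1p=\theta+\frac{1-\theta}q$, for every $p<\infty$. I expect the genuine obstacle to be $\eps$-uniformity near $\partial\Omega$: the super-level testing is clean precisely because the test function vanishes on $\partial\Omega$, but running De Giorgi--Nash up to the boundary for the Hölder statement requires a local energy bound that does not concentrate on $\partial\Omega$, and it is the Neumann condition — through the reflection — that supplies this. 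Without it (the setting of (1)) boundary concentration cannot be excluded, which is why (1) stops at $L^p$ in $n=3$; establishing the reflected local estimates and checking that every iteration constant stays bounded as $\eps\to0$ is the technical heart of the argument.
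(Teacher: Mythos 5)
There is a genuine gap, and it sits exactly where you place it yourself: the entire scheme rests on first-order testing of $\eps\,\Delta u_\eps-\frac1\eps W'(u_\eps)=w_\eps$ against $(u_\eps-M)_+$ and its powers, followed by a Stampacchia/Moser iteration whose $\eps$-uniform closure you defer as ``the technical heart''. In this $\eps$-degenerate setting each Sobolev step costs an uncompensated factor: from $\eps\,\|\nabla (u_\eps-M)_+^\beta\|_{L^2}^2\lesssim \beta\,\|w_\eps\|_{L^2}\,\|(u_\eps-M)_+\|_{L^{2(2\beta-1)}}^{2\beta-1}$ and $\|w_\eps\|_{L^2}\lesssim\eps^{1/2}$ one picks up $\eps^{-1/2}$ per step, to be repaid by the $\eps$-smallness of lower norms; without that bookkeeping the conclusion is an assertion, and the heuristic you offer for the dichotomy --- finiteness of $2^*=6$ in $n=3$ --- is not the operative mechanism (classical Moser iteration reaches $L^\infty$ in every dimension with finite $2^*$ when it closes; what limits $n=3$ here is the accumulation of $\eps$-weights). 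The paper avoids iteration entirely by a \emph{second-order} device absent from your proposal: expanding the square in $\alpha_\eps$ and integrating by parts over superlevel sets yields
\[
c_0\,\alpha_\eps(\{|u_\eps|>\theta'\}) \;\geq\; \int_{\{|u_\eps|>\theta'\}} \eps\,(\Delta u_\eps)^2 + \frac4\eps\,|\nabla u_\eps|^2 + \frac1{\eps^3}\,W'(u_\eps)^2\dx,
\]
the boundary term being harmless because $\partial\{|u_\eps|>\theta'\}\subset\{|u_\eps|=\theta'\}\subset\Omega$ (resp.\ vanishing under the Neumann condition). This gives at once $|\{|u_\eps|>\theta'\}|\lesssim\eps^3$ and $\eps^{-3}\int W'(u_\eps)^2\lesssim 1$; part (1) then follows from a single comparison $u_\eps\leq\theta+\psi_\eps$ with $-\Delta\psi_\eps=\frac1\eps\,(h_\eps-\frac1\eps W'(\theta'))_+\chi_{\{u_\eps>\theta'\}}$, $\|\psi_\eps\|_{W^{2,q}}\lesssim\eps^{3/q-2}$ for $q<2$, and the true source of the dichotomy is the embedding of $W^{2,q}$: $W^{2,q}\hookrightarrow L^\infty$ for $q>1$ when $n=2$, while for $n=3$ the $\eps$-uniformity forces $q\leq 3/2$ and $W^{2,3/2}$ embeds only into $L^p$, $p<\infty$.

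Part (2) of your argument is moreover circular as written. You claim the level-set testing ``gives the uniform $L^\infty$ bound in both dimensions'' under Neumann conditions, but no reason is given why the Neumann condition should let the iteration reach $L^\infty$ in $n=3$ when, by your own analysis in part (1), it stalls at finite $p$; and this unestablished boundedness is then the input to your Calder\'on--Zygmund step (``once $\tilde u$ is bounded one has $W'(\tilde u)\in L^2_{\mathrm{loc}}$''), so the H\"older estimate inherits the gap. The paper's logic runs the other way and is non-circular: the coercivity identity above supplies $\int_{B_{2\eps}(x)}\eps^{-n}\,W'(u_\eps)^2\dx\leq C$ \emph{without} any prior boundedness; the Neumann condition both kills the boundary term in that identity and survives the flattening by diffeomorphisms $\phi_\eps\to\id$ in $C^2$ (chosen to preserve orthogonality at the boundary) followed by even reflection; the resulting uniform $W^{2,2}(B_1)$ bound on the blow-up then delivers $L^\infty$ and $C^{0,\gamma}$ simultaneously, via $W^{2,2}\hookrightarrow C^{0,\gamma}$ with $\gamma<1$ ($n=2$) and $\gamma\leq 1/2$ ($n=3$). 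Two smaller points: bounded measurable coefficients do not suffice for the boundary $W^{2,2}$ estimate you invoke after reflection --- you need the $C^1$-closeness to the identity that the $C^2$ boundary and the $\eps$-scale flattening provide, as the paper arranges explicitly; your part (3), by contrast, is fine and coincides with the paper's interpolation corollary.
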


Further results can be found in the main text. The proof is split over Lemmas \ref{third regularity lemma}, \ref{fourth regularity lemma} and \ref{second regularity lemma}. On the other hand, we have the following results on situations where phase fields fail to be regular at the boundary.

\begin{theorem} \label{thm:main2}
Let $\partial\Omega\in C^2$. Then the following hold true.

\begin{enumerate}
\item There exists a sequence $u_\eps\in W^{2,2}(\Omega)$ such that $(\W_\eps + S_\eps)(u_\eps)\to 0$, but $u_\eps$ is not bounded in $L^\infty(\Omega)$.

\item There exists a sequence $u_\eps$ such that such that $\alpha = 0$, $\mu = 0$ but the Hausdorff limit
\[
K\coloneqq \lim_{\eps\to 0} u_\eps^{-1}(I)\qquad \emptyset \neq I \cc (-1,1)
\]
of level sets or their unions contains an open subset of $\partial\Omega$. Similar constructions give $K = \{x_0\}$ or $K=\gamma$ for a point $x_0\in\partial\Omega$ and a closed curve $\gamma\subset\partial\Omega$.

\item Let $S>0$ and $\emptyset \neq I \cc (-1,1)$. Then there exists a point $x_0\in \partial\Omega$ and a sequence $u_\eps\in W^{2,2}(\Omega)$ such that $|u_\eps|\leq 1$ in $\overline\Omega$, $\W_\eps(u_\eps) \equiv 0$, $\mu_\eps(\Omega)\equiv S$, $K=\emptyset$ and $\mu = S\cdot\delta_{x_0}$. 
\end{enumerate}

If $\Omega$ is convex, any point $x_0$ or closed curve $\gamma$ in $\partial\Omega$ can be chosen and $u_\eps$ may be such that it is not uniformly bounded in $\Omega\cap U$ for all open sets $U$ with $U\cap \partial\Omega\neq\emptyset$.
\end{theorem}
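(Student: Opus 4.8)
The plan is to isolate one local building block and then use convexity to place it at any prescribed boundary point. The block is the optimal profile $q$, the increasing solution of $q'' = W'(q)$ with $q(\pm\infty) = \pm1$; for a unit vector $a$ and a constant $c$ the function $u(x) = q((\langle a,x\rangle + c)/\eps)$ satisfies $\eps^2\Delta u = W'(u)$ pointwise, so its Willmore integrand vanishes identically, its total diffuse area converges to $\H^{n-1}(\{\langle a,\cdot\rangle = -c\}\cap\Omega)$, and its transition layer $u^{-1}(I)$ is the $\eps$-thin slab around that slice. Convexity enters through supporting hyperplanes: at $x_0\in\partial\Omega$ I take $a = \nu(x_0)$, so that the slice $\{\langle\nu(x_0),\cdot - x_0\rangle = -s\}\cap\Omega$ is a cap contained in a shrinking neighbourhood of $x_0$, with diameter and $\H^{n-1}$-measure tending to $0$ as $s\to0$. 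It is precisely here that convexity is used: for a non-convex domain the same hyperplane may re-enter $\Omega$ far from $x_0$ and generate unwanted interface, whereas a supporting hyperplane meets $\ol\Omega$ only in the contact cap near $x_0$.

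For a single prescribed point $x_0$ I would run the constructions behind (2) and (3) verbatim but with $a = \nu(x_0)$ and a parameter $s = s_\eps\to0$; the cutting cap then shrinks to $x_0$, forcing the Hausdorff limit of the level sets to be $\{x_0\}$ and the diffuse mass to concentrate at $x_0$. For a prescribed closed curve $\gamma\subset\partial\Omega$ I would fix an $r_\eps$-net $\{x_0^{(1)},\dots,x_0^{(N_\eps)}\}$ of $\gamma$ with $r_\eps\to0$, activate the block at each net point with cap parameter $s_\eps$ chosen small compared with $r_\eps$, and interpolate to the background state $-1$ away from the caps. The caps are then pairwise disjoint and the total diffuse area is at most $N_\eps$ times the vanishing area of a single cap; choosing $s_\eps$ small enough makes it tend to $0$, while the union of the transition layers becomes $r_\eps$-dense in $\gamma$ and hence Hausdorff-converges to $\gamma$.

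To obtain a single sequence that is unbounded in every $\Omega\cap U$ meeting the boundary I would replace the bounded profile $q$ by the unbounded profiles used in (1): the equation $\eps^2 v'' = W'(v)$ admits solutions with positive energy constant that escape every interval, and a truncation of such a solution gives, near a boundary point, a transition whose height $M_\eps$ diverges while its energy, supported on a cap of vanishing area, is driven to $0$. I would fix a countable dense set $\{x_k\}\subset\partial\Omega$ and plant such a diverging spike at each $x_k$, in a cap so small that the spikes have pairwise disjoint supports, with height at least $M_{\eps,k}\to\infty$ and energy at most $2^{-k}\theta_\eps$ for a suitable $\theta_\eps\to0$. For fixed $\eps$ only the finitely many caps exceeding the transition scale are active, so $u_\eps$ is a finite superposition lying in $W^{2,2}(\Omega)\subset C^0(\ol\Omega)$ with $(\W_\eps + S_\eps)(u_\eps)\to0$; since the spike locations are dense and their heights diverge, for every open $U$ meeting $\partial\Omega$ some $x_k$ lies in $U$ and $\sup_\eps\|u_\eps\|_{L^\infty(\Omega\cap U)} = \infty$, while $\mu = \alpha = 0$.

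The step I expect to dominate the work is the Willmore bookkeeping at the seams where each profile is cut off and glued to $-1$: away from these collars the Willmore integrand is exactly zero, so the whole diffuse Willmore energy is manufactured there and must be shown to vanish after summation over the $N_\eps$ caps, respectively the active spikes. I would control it by confining each gluing to the interior of its cap, where the cross-sectional area is already of order $s_\eps^{(n-1)/2}$, and by sending the energy constant of the unbounded profiles to $0$ so that the slope mismatch at each seam, and with it the collar's contribution to $\W_\eps$, is as small as the summation demands.
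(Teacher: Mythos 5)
Your building blocks are genuinely different from the paper's: you work with one-dimensional profiles composed with supporting hyperplanes, whereas the paper constructs exact solutions of $-\Delta\tilde u+W'(\tilde u)=0$ on the half space $H=\{x_n>0\}$ with prescribed Dirichlet data (minimising $\F$ over $(1+h)+W^{1,2}_0(H)$, resp.\ $(1-h)+W^{1,2}_0(H)$), establishes exponential decay, and rescales; there all localisation comes from the decay of the solution itself, not from cap geometry, and $\W_\eps\equiv 0$ holds with no gluing at all. For parts (1) and (2) your route could probably be completed, but for part (3) there is a genuine gap that your toolbox cannot close. Part (3) demands \emph{simultaneously} $|u_\eps|\leq 1$, $\W_\eps(u_\eps)\equiv 0$ (identically, so no seams are admissible), $\mu_\eps(\Omega)\equiv S$ for a \emph{prescribed} $S>0$, $K=\emptyset$, and $\mu=S\,\delta_{x_0}$. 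Your planar transition confined to a shrinking cap carries diffuse area of order $c_0\,\H^{n-1}(\text{cap slice})\to 0$, so it produces $\mu=0$, not $S\,\delta_{x_0}$; nothing in your plan pins the mass at $S$. Repairs inside the 1-D framework fail structurally: stacking transitions to carry mass $S$ forces $u_\eps$ to cross $(-1,1)$ repeatedly, so $K\neq\emptyset$; and a 1-D solution of $v''=W'(v)$ cannot oscillate while confined to $[1-\delta,1]$, since turning points require equal potential values on both sides and $W$ is injective on $[0,1]$. This is precisely why the paper's construction for (3) is essentially multi-dimensional: it prescribes small-amplitude, rapidly oscillating boundary data $1-h$ with $0\leq h\leq\delta$, uses the trace inequality $\int_H|\nabla u|^2\dx\geq [h]_{H^{1/2}(\partial H)}^2$ so that the energy is forced by oscillation rather than by height, keeps $u\in[1-\delta,1]$ (hence $K=\emptyset$) via a modified potential $\ol W$ and the maximum principle, and hits $\mu_\eps(\Omega)=S$ exactly through a continuity argument in the scaling parameter ($\theta\mapsto\F(\tilde u_\theta)$ is continuous). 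You have no analogue of either the trace mechanism or the exact normalisation.

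Two secondary problems. First, in part (1) your seam control is wrong as stated: a spike of height $M_\eps$ cannot be capped off by a solution, because wherever $u>1$ one has $W'(u)>0$, so solutions are convex there and admit no interior maximum; at any interior cap the defect $\eps\Delta u-\frac1\eps W'(u)$ is of size $\frac1\eps W'(M_\eps)$, and sending the energy constant $E\to 0$ only tames the seam at the \emph{foot} of the spike (where the slope is $\sqrt{2E}$), not at the top. The fix, which brings you back to the paper's construction, is to let the maximum be attained on $\partial\Omega$ itself, i.e.\ to encode the divergence in the boundary values (the paper scales the Dirichlet datum by $\theta_\eps\to\infty$ while $S_\eps\to 0$), so that no interior truncation ever occurs. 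Second, your lateral confinement relies on \emph{strict} convexity: at a flat piece of a convex boundary your slab's level sets converge to the whole face, not to the chosen point, whereas the paper's half-space solutions decay exponentially in all directions and concentrate at the chosen point regardless of the shape of the contact set. This matters for the final claim that \emph{any} boundary point of a convex domain can be chosen.
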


This shows that for example the minimisation problem for
\[
\F_\eps = \W_\eps + \eps^{-\sigma}(S_\eps -S)^2
\]
is not physically meaningful without boundary conditions or with partly free boundary conditions \eqref{eq weak boundary} if $\partial_{\text{free}}\Omega := \partial\Omega\setminus (\Gamma_+\cup \Gamma_-) \neq \emptyset$. A minimising sequence is given by the superposition of a phase-field making an optimal transition along a minimal surface spanning a suitable boundary curve inside $\partial_{\text{free}}\Omega$ and a second phase-field creating an atom of $\mu$ of the correct size at a single point $x\in \partial_{\text{free}}\Omega$. This can be realised with energy $\W_\eps(u_\eps) \to 0$ as $\eps\to 0$. 

The question under which boundary conditions other than \eqref{eq strict boundary} the measure $\mu$ can be expected to be regular at the boundary for either finite energy sequences or minimising sequences remains open.

\section{Positive Results on Boundary Regularity}

In this chapter, we describe partial regularity results for weakly controlled boundary values.

\begin{lemma}\label{third regularity lemma}
Assume that $u_\eps$ is continuous on $\ol \Omega$ and there is $\theta\geq 1$ such that $|u_\eps|\leq\theta$ on $\partial\Omega$ for all $\eps>0$. Then the following hold true.
 
\begin{enumerate}
 \item There exists $C>0$ such that $\mu_\eps(\{|u_\eps|\geq \theta\})\leq C\,\eps^2$.
 
 \item For the set $\tilde\Omega_\eps = \{x\in \Omega\:|\: B_{2\eps}(x)\subset\Omega\}$ we can show that there exists $C$ depending only on $\bar\alpha, \gamma$ and $\theta$ such that
\[
||u_\eps||_{\infty,\tilde\Omega_\eps}\leq C, \qquad |u_\eps(y) - u_\eps(z)|\leq \frac{C_{\bar\alpha, \theta,\gamma}}{\eps^{\gamma}}\,|y-z|^{\gamma}
\]
if there is $x\in\tilde\Omega_\eps$ such that $y, z\in B_\eps(x)$ and $\gamma\leq 1/2$ if $n=3$, $\gamma<1$ if $n=2$.
\end{enumerate}
\end{lemma}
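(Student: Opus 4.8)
The plan is to derive everything from the Willmore bound: writing $f_\eps:=\eps\,\Delta u_\eps-\tfrac1\eps\,W'(u_\eps)$, the standing hypothesis $\W_\eps(u_\eps)\le\bar\alpha$ means $\|f_\eps\|_{L^2(\Omega)}^2=c_0\,\eps\,\W_\eps(u_\eps)\le c_0\,\bar\alpha\,\eps$, and the only other input is $|u_\eps|\le\theta$ on $\partial\Omega$. For part (1), by the symmetry of $W$ it suffices to bound $\mu_\eps$ on $A:=\{u_\eps>\theta\}$, the set $\{u_\eps<-\theta\}$ being treated by replacing $u_\eps$ with $-u_\eps$. Since $u_\eps\le\theta$ on $\partial\Omega$, the function $(u_\eps-\theta)_+$ belongs to $W^{1,2}_0(\Omega)$; testing the defining relation for $f_\eps$ against it and integrating by parts produces
\[
\eps\int_A|\nabla u_\eps|^2+\frac1\eps\int_A W'(u_\eps)\,(u_\eps-\theta)=-\int_A f_\eps\,(u_\eps-\theta).
\]
On $A$ we have $u_\eps>\theta\ge1$, so $W'(u_\eps)\ge0$, and the elementary one–variable inequality $W'(t)(t-\theta)\ge c_\theta\,(t-\theta)^2$ holds for $t\ge\theta$. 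Inserting this and applying Cauchy–Schwarz to the right-hand side yields first $\|(u_\eps-\theta)_+\|_{L^2}\le C\,\eps^{3/2}$ and, reinserting, $\eps\int_A|\nabla u_\eps|^2\le C\,\eps^2$ together with $\tfrac1\eps\int_A W'(u_\eps)(u_\eps-\theta)\le C\,\eps^2$; the first of these is exactly the gradient part of $\mu_\eps(A)$.

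It remains to bound $\tfrac1\eps\int_A W(u_\eps)$. I split $A$ at the larger root of $t^2-2\theta t+1$. On the far part the pointwise inequality $W'(t)(t-\theta)\ge 2\,W(t)$ is valid, so the potential there is dominated by the quantity already controlled. On the near part $W(u_\eps)$ is bounded by a constant $M_\theta$, and it remains to estimate $|A|$. For $\theta>1$ I test the equation against $\min\{(u_\eps-\theta)_+/\delta,\,1\}\in W^{1,2}_0(\Omega)$, discard the nonnegative gradient term, and let $\delta\to0$ to obtain $\tfrac{W'(\theta)}\eps\,|A|\le\|f_\eps\|_{L^2}\,|A|^{1/2}$, hence $|A|\le C_\theta\,\eps^3$ (here $W'(\theta)>0$ is used); the near part then contributes at most $\tfrac1\eps M_\theta\,|A|\le C\,\eps^2$. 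For $\theta=1$ no splitting is necessary, since $W'(t)(t-1)-2W(t)=\tfrac12(t-1)^3(t+1)\ge0$ on $\{t\ge1\}$ and $W(1)=0$. Adding the pieces gives $\mu_\eps(A)\le C\,\eps^2$, which is part (1).

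For part (2) I fix $x\in\tilde\Omega_\eps$ and rescale, setting $v(y):=u_\eps(x+\eps y)$ on $B_2(0)$, which is admissible because $B_{2\eps}(x)\subset\Omega$. The equation becomes $\Delta v=W'(v)+g$ with $g(y)=\eps\,f_\eps(x+\eps y)$, and a change of variables with the Willmore bound gives $\|g\|_{L^2(B_2)}^2=\eps^{2-n}\int_{B_{2\eps}(x)}|f_\eps|^2\le C\,\eps^{3-n}$, so $\|g\|_{L^2(B_2)}\le C$ for $n=3$ and $\le C\,\eps^{1/2}$ for $n=2$. Part (1) moreover yields $\int_{B_{2\eps}(x)}u_\eps^2\le C\,\eps^n$: where $|u_\eps|$ stays below a fixed constant the integrand is bounded and the ball has volume $\le C\,\eps^n$, while on the remaining set, contained in $\{|u_\eps|\ge\theta\}$, one has $u_\eps^2\le C\,W(u_\eps)$ and $\int_{\{|u_\eps|\ge\theta\}}W(u_\eps)\le C\,\eps^3$. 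Hence $\|v\|_{L^2(B_2)}\le C$ uniformly in $\eps$ and $x$. A De Giorgi–Stampacchia iteration for $\Delta v=W'(v)+g$ on $B_2$, exploiting that $t\,W'(t)$ is coercive for large $|t|$, then gives $\|v\|_{L^\infty(B_1)}\le C$, i.e.\ $\|u_\eps\|_{L^\infty(\tilde\Omega_\eps)}\le C$. With $v$ bounded, $W'(v)$ is bounded, so $\Delta v\in L^2(B_1)$ with a uniform bound; interior elliptic regularity gives $\|v\|_{W^{2,2}(B_{1/2})}\le C$, and the Sobolev embedding $W^{2,2}(B_{1/2})\hookrightarrow C^{0,\gamma}(B_{1/2})$ — valid for $\gamma\le\tfrac12$ when $n=3$ and for every $\gamma<1$ when $n=2$ — supplies a uniform Hölder seminorm for $v$ on $B_{1/2}$. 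Undoing the scaling, $|u_\eps(y)-u_\eps(z)|=|v(\tfrac{y-x}\eps)-v(\tfrac{z-x}\eps)|\le[v]_{C^{0,\gamma}}\,\eps^{-\gamma}|y-z|^\gamma$ for $y,z\in B_\eps(x)$, which is the asserted estimate and explains both the factor $\eps^{-\gamma}$ and the dimension-dependent range of $\gamma$.

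The step I expect to be the real obstacle is the interior $L^\infty$ bound in part (2). The nonlinearity $W'(v)=v^3-v$ is a priori large, and for $n=3$ the forcing $g$ is only bounded in $L^2(B_2)$, which is the borderline case for sup bounds; the iteration must genuinely use the favourable sign and cubic growth of $W'$ on super-level sets rather than regard $W'(v)$ as a prescribed right-hand side. Alternatively one can reach $L^\infty$ by an elliptic bootstrap starting from the $L^4$ bound on $v$ furnished by part (1), but some use of the structure of $W'$ is unavoidable. Once boundedness is in hand, the remaining ingredients — interior elliptic regularity, the Sobolev embedding, and the scaling — are routine.
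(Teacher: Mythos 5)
Your proposal is correct, but it is organised quite differently from the paper's proof, and the difference is instructive. For part (1) the paper does not test against $(u_\eps-\theta)_+$: it expands the square in $\alpha_\eps(\{|u_\eps|>\theta'\})$ for $\theta'>\theta$ and integrates the cross term $-\tfrac2\eps\,\Delta u_\eps\,W'(u_\eps)$ by parts over the superlevel set, using exactly your observation that $\partial\{|u_\eps|>\theta'\}\subset\{|u_\eps|=\theta'\}\subset\Omega$ so that the boundary term has a favourable sign, together with $W''\geq 2$ on $\{|u_\eps|\geq 1\}$. This yields the stronger package
\[
\int_{\{|u_\eps|>\theta'\}}\eps\,(\Delta u_\eps)^2+\frac4\eps\,|\nabla u_\eps|^2+\frac1{\eps^3}\,W'(u_\eps)^2\dx\;\leq\;c_0\,\bar\alpha,
\]
from which $\mu_\eps(\{|u_\eps|>\theta'\})\leq C\eps^2$ follows at once (via $W\leq\tfrac14\,(W')^2$ for $|u_\eps|\geq1$), and which, crucially, also gives $\eps^{-n}\int_{B_{2\eps}(x)}W'(u_\eps)^2\dx\leq C$ for $n\leq3$. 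That is precisely why the paper's part (2) needs no iteration: treating $W'(\tilde u_\eps)$ plus the rescaled Willmore term as a given $L^2$ right-hand side, the linear interior estimate $\|\tilde u_\eps\|_{W^{2,2}(B_1)}\leq C\bigl(\|\tilde u_\eps\|_{L^2(B_2)}+\|\Delta\tilde u_\eps\|_{L^2(B_2)}\bigr)$ and the embedding $W^{2,2}\hookrightarrow L^\infty\cap C^{0,\gamma}$ for $n\leq 3$ finish the proof. Your first-order testing produces weaker information — you control $\int_A W'(u_\eps)(u_\eps-\theta)\dx$, $\int_A W(u_\eps)\dx$ and $|A|$, but not $\int_A W'(u_\eps)^2\dx$ — and that is exactly the gap you then close with the De~Giorgi--Stampacchia iteration exploiting the sign of $W'$ on superlevel sets (which does work: for $n=3$ the forcing lies in $L^2$ and $2>n/2$, so this is in fact not borderline). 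Both routes are sound; yours is more elementary and self-contained step by step, while the paper front-loads the work into part (1) so that part (2) reduces to a single linear elliptic estimate.

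Three small repairs to your write-up, none affecting the substance. First, on $\{|u_\eps|\geq\theta\}$ the inequality $u_\eps^2\leq C\,W(u_\eps)$ fails near $|u_\eps|=1$ when $\theta=1$; split instead at, say, $|u_\eps|=2$, absorbing $\{\theta\leq|u_\eps|\leq2\}$ into the region where the integrand is bounded. Second, your chain of radii $B_2\to B_1\to B_{1/2}$ delivers the H\"older bound only for $y,z\in B_{\eps/2}(x)$, whereas the statement asks for $y,z\in B_\eps(x)$; run the identical argument with radii $2\to\tfrac32\to1$. Third, like the paper you really bound $\mu_\eps(\{|u_\eps|>\theta\})$ rather than $\mu_\eps(\{|u_\eps|\geq\theta\})$; for $\theta>1$ one can add that a.e.\ on $\{|u_\eps|=\theta\}$ both $\nabla u_\eps$ and $\Delta u_\eps$ vanish, so the $\alpha_\eps$-density there equals $W'(\theta)^2/(c_0\eps^3)$, whence $|\{|u_\eps|=\theta\}|\leq C\eps^3$ and the level set contributes at most $C\eps^2$ to $\mu_\eps$.
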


\begin{proof}
This proof is an adaptation of the proof of Lemma \cite[Lemma 3.1]{MR3590663} using a modified argument in the first step of the proof. We observe that for the proof of Lemma \cite[Lemma 3.1]{MR3590663} to work, we needed that $B_{2\eps}(x)\subset\Omega$ to employ the elliptic inequality
\[
||\tilde u_\eps||_{2,2, B_1(0)}\leq C\,\left(||\tilde u_\eps||_{2,B_2(0)} + ||\Delta \tilde u_\eps||_{2, B_2(0)}\right)
\]
and an estimate of $\int_{B_{2\eps}(x)}\frac1{\eps^n}W'(u_\eps)^2\dx$. The first one we are given directly by the choice of $\Omega_\eps^\beta$ or $\tilde \Omega_\eps$, the second can be obtained through integration by parts
\begin{align*}
c_0\,\alpha_\eps(\{&|u_\eps|>\theta'\}) = \int_{\{|u_\eps|>\theta'\}} \frac1\eps \left(\eps\,\Delta u_\eps - \frac1\eps\,W'(u_\eps)\right)^2\dx\\
	&= - \frac2\eps \int_{\partial\{|u_\eps|>\theta'\}}W'(u_\eps)\,\partial_\nu u_\eps\d\H^{n-1}\\ &\qquad + \int_{\{|u_\eps|>\theta'\}} \eps\,(\Delta u_\eps)^2 + \frac2\eps\,W''(u_\eps)\,|\nabla u_\eps|^2 + \frac1{\eps^3}\,W'(u_\eps)^2\dx\\
	&\geq \int_{\{|u_\eps|>\theta'\}} \eps\,(\Delta u_\eps)^2 + \frac4\eps\,|\nabla u_\eps|^2 + \frac1{\eps^3}\,W'(u_\eps)^2\dx
\end{align*}
for $\theta'>\theta$ when $\{|u_\eps|>\theta'\}$ is a Caccioppoli set (i.e.\ for almost all $\theta'>\theta$). If $|u_\eps|<\theta'$ on $\partial\Omega$, the set $\{u_\eps>\theta'\}$ does not touch the boundary $\partial\Omega$, so $\partial\{u_\eps>\theta'\}\subset \{u_\eps = \theta'\} \subset\Omega$. Because $W'(\theta)>0$ and $\partial_\nu u_\eps$ is inward pointing on $\partial\{u_\eps>\theta\}$, the boundary integral is non-positive. The rest of the argument goes through as before. Additionally, taking $\theta'\to \theta$ establishes the first claim. 
\end{proof}

\begin{remark}
The same bound holds for example on $\widetilde \Omega_{\eps^{1/2}} = \{x\in \Omega\:|\:B_{\eps^{1/2}}(x)\subset \Omega\}$ without boundary values. In that situation, we employ the estimate from \cite[Proposition 3.6]{roger:2006ta} to bound
\[
\frac1{\eps^3}\int_{\{|u_\eps|>1\}} W'(u_\eps)^2\dx \leq C.
\]
\end{remark}

Another situation with a similar improvement is that of prescribed Neumann boundary data.

\begin{lemma}\label{fourth regularity lemma}
Assume that $\Omega$ is a Caccioppoli set and $\partial_\nu u_\eps = 0$ almost everywhere on $\partial \Omega$ with respect to the boundary measure $|D\chi_\Omega|$. Then the following hold true.
 
\begin{enumerate}
 \item There exists $C>0$ such that $\mu_\eps(\{|u_\eps|\geq 1\})\leq C\,\eps^2$.
 
 \item For the set $\tilde\Omega_\eps = \{x\in \Omega\:|\: B_{2\eps}(x)\subset\Omega\}$ we can show that there exists $C$ depending only on $\bar\alpha$ and $\gamma$ such that
\[
||u_\eps||_{\infty,\tilde\Omega_\eps}\leq C, \qquad |u_\eps(y) - u_\eps(z)|\leq \frac{C}{\eps^{\gamma}}\,|y-z|^{\gamma}
\]
if there is $x\in\tilde\Omega_\eps$ such that $y, z\in B_\eps(x)$. Here $\gamma\leq 1/2$ if $n=3$, $\gamma<1$ if $n=2$.
\end{enumerate}

If $\partial\Omega \in C^2$ and $\partial_\nu u_\eps = 0$ almost everywhere on $\partial \Omega$, then the second statement can be sharpened as follows:

\begin{enumerate}
\item[2'.] For all $x\in \ol\Omega$ there exists a constant $C$ depending only on $\bar\alpha,\bar \mu,\gamma$ and $\partial\Omega$ such that
\[
|u_\eps(x)| \leq C, \qquad |u_\eps(y) - u_\eps(z)|\leq \frac{C}{\eps^\gamma}\,|x-y|^\gamma\qquad\forall\ y,z\in B_\eps(x)\cap \ol\Omega.
\]
The dependence of $C$ on $\partial \Omega$ vanishes in the limit $\eps\to 0$.
\end{enumerate}
\end{lemma}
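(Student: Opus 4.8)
The plan is to reduce the boundary estimate 2' to the interior estimate of statement 2 by a rescaling-and-reflection argument, exploiting that the homogeneous Neumann condition turns even reflection into a $W^{2,2}$-preserving operation. Points $x$ with $B_{2\eps}(x)\subset\Omega$ are already covered by statement 2, so it suffices to treat $x$ with $\dist(x,\partial\Omega)\le 2\eps$; replacing such an $x$ by its (for $\eps$ small, unique) nearest boundary point $x_0\in\partial\Omega$ and slightly enlarging the radius, it is enough to bound the rescaled function and its Hölder seminorm on a unit ball centred at a boundary point. Writing $w_\eps := \eps\,\Delta u_\eps - \frac1\eps\,W'(u_\eps)$, we have $\Delta u_\eps = \frac1{\eps^2}\,W'(u_\eps) + \frac1\eps\,w_\eps$ with $\int_\Omega w_\eps^2 = c_0\,\eps\,\alpha_\eps(\Omega)\le c_0\,\bar\alpha\,\eps$, while the diffuse area bound gives $\int_{B_{2\eps}(x_0)}W(u_\eps)\le c_0\,\bar\mu\,\eps$; these are the two ingredients driving the estimate after rescaling. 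Setting $\tilde u_\eps(y)=u_\eps(x_0+\eps y)$ on $\Omega_\eps:=\eps^{-1}(\Omega-x_0)$ gives $\Delta\tilde u_\eps = W'(\tilde u_\eps) + \eps\,w_\eps(x_0+\eps\,\cdot)$, and the rescaled forcing obeys $\int_{B_2(0)\cap\Omega_\eps}\bigl(\eps\,w_\eps(x_0+\eps\,\cdot)\bigr)^2 = \eps^{2-n}\int_{B_{2\eps}(x_0)}w_\eps^2 \le c_0\,\bar\alpha\,\eps^{3-n}$, which is bounded for $n=2,3$.

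Next I would straighten the boundary. Because $\partial\Omega\in C^2$, the rescaled boundary $\partial\Omega_\eps\cap B_2(0)$ has principal curvatures of order $\eps$ and thus converges to a hyperplane in $C^2$ as $\eps\to0$. I would flatten it using the signed-distance (Fermi) coordinate to $\partial\Omega_\eps$ as the $n$-th coordinate: this furnishes a $C^2$ diffeomorphism $\Phi_\eps$ converging to the identity in $C^2$, and it is precisely the choice for which the transformed Laplacian $L_\eps = a_\eps^{ij}\partial_i\partial_j + b_\eps^i\partial_i$ satisfies $a_\eps^{nj}=0$ for $j<n$ and $a_\eps^{nn}=1$ on $\{y_n=0\}$, so that its conormal derivative coincides with $\partial_{y_n}$. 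In these coordinates the Neumann condition $\partial_\nu u_\eps=0$ becomes $\partial_{y_n}v_\eps = 0$ on $\{y_n=0\}$ for $v_\eps:=\tilde u_\eps\circ\Phi_\eps^{-1}$, which solves $L_\eps v_\eps = W'(v_\eps) + \eps\,w_\eps(\Phi_\eps^{-1}(\cdot))$ on the half-ball $B_2^+(0)$.

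I would then extend $v_\eps$ by even reflection, $\overline v_\eps(y',y_n):=v_\eps(y',|y_n|)$. The vanishing normal derivative makes $\overline v_\eps\in W^{2,2}(B_2(0))$, and reflecting the coefficients evenly ($a^{ij}$ for $i,j<n$, $a^{nn}$, and $b^i$ for $i<n$) and oddly ($a^{ni}$ for $i<n$ and $b^n$) yields continuous, uniformly elliptic coefficients — the odd reflections are continuous because they vanish on $\{y_n=0\}$ — so that $\overline v_\eps$ solves, a.e.\ on $B_2(0)$, a uniformly elliptic equation $\overline L_\eps\overline v_\eps = W'(\overline v_\eps) + (\text{controlled }L^2\text{ term})$, the term $W'(v_\eps)$ reflecting evenly and hence compatibly with $\overline v_\eps$. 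Since the image of $x_0$ is now an interior point of the domain of $\overline v_\eps$, the Calderón–Zygmund $W^{2,2}$ estimate on $B_1(0)$ in terms of $L^2$-data on $B_2(0)$, followed by $W^{2,2}\hookrightarrow C^{0,\gamma}$ with $\gamma<1$ for $n=2$ and $\gamma\le 1/2$ for $n=3$, applies exactly as in statement 2 and yields $\|\overline v_\eps\|_{C^{0,\gamma}(B_1(0))}\le C$. Undoing the straightening (uniformly bi-Lipschitz, $C^2$-close to the identity) and the rescaling returns the stated bounds $|u_\eps(x)|\le C$ and the $\eps^{-\gamma}$ Hölder estimate on $B_\eps(x_0)\cap\overline\Omega$. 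The only $\partial\Omega$-dependence of $C$ enters through the coefficients $a_\eps^{ij},b_\eps^i$, which converge to the flat-space constants $\delta^{ij},0$ as the rescaled curvature tends to $0$; hence that dependence vanishes as $\eps\to0$.

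The main obstacle I anticipate is the reflection step. One must verify that the even extension is genuinely $W^{2,2}$ across $\{y_n=0\}$ with no spurious distributional term on the interface, which is exactly what forces the choice of Fermi coordinates so that the transformed natural (conormal) boundary operator equals $\partial_{y_n}$; and one must check that the reflected coefficients remain uniformly elliptic with a modulus of continuity independent of $\eps$, since this uniformity is what prevents the constant in the interior estimate from degenerating and is simultaneously the mechanism by which its $\partial\Omega$-dependence disappears in the limit $\eps\to0$.
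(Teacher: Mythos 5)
Your strategy for statement 2' is essentially the paper's own: blow up at a boundary point, flatten $\partial\Omega/\eps$ with a $C^2$-diffeomorphism that converges to the identity and preserves the Neumann condition (your Fermi-coordinate choice is a concrete realization of the paper's condition that orthogonality be preserved), extend by even reflection across the flattened boundary, apply interior elliptic estimates to the reflected function, and conclude by Sobolev embedding, with the $\partial\Omega$-dependence of the constant vanishing as the coefficients tend to the flat ones. Your explicit even/odd reflection of the coefficients is if anything more careful than the paper's treatment. However, there are two genuine gaps. First, you never prove parts 1 and 2 of the lemma, and they are not available as a black box: the corresponding estimates in the preceding lemma are proved under the hypothesis $|u_\eps|\le\theta$ on $\partial\Omega$, which is different from the Neumann hypothesis here. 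One must redo the integration by parts over $\{|u_\eps|>\theta'\}$, splitting the boundary integral into the portion on $\partial\Omega$ (which vanishes by $\partial_\nu u_\eps=0$) and the portion inside $\Omega$ (which has a sign because $W'(\theta')>0$ and $\partial_\nu u_\eps$ points inward there); this yields $\frac1{\eps^3}\int_{\{|u_\eps|>\theta'\}}W'(u_\eps)^2\dx\le c_0\,\alpha_\eps(\Omega)$ and, letting $\theta'\to1$, part 1. This step is the heart of the lemma and is missing from your proposal.

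Second, and relatedly, your identification of the "two ingredients driving the estimate" is wrong in a way that breaks the Calder\'on--Zygmund step. The diffuse area bound $\int_{B_{2\eps}(x_0)}W(u_\eps)\dx\le c_0\,\bar\mu\,\eps$ scales, after the blow-up $y\mapsto u_\eps(x_0+\eps y)$, like $\eps^{-n}\cdot\eps=\eps^{1-n}$, which blows up for $n=2,3$; it therefore cannot furnish the uniform $L^2(B_2)$ bounds on $W'(\ol v_\eps)$ (nor on $\ol v_\eps$ itself) that your elliptic estimate requires as data, and no pointwise inequality between $W'(u)^2=4u^2W(u)$ and $W(u)$ rescues this without an a priori $L^\infty$ bound, which is exactly what is being proved. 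What is actually needed is $\eps^{-n}\int_{B_{2\eps}(x_0)}W'(u_\eps)^2\dx\le C$: on $\{|u_\eps|\le1\}$ this holds trivially since $W'$ is bounded there and $\L^n(B_{2\eps})\sim\eps^n$, while on $\{|u_\eps|>1\}$ it is precisely the integration-by-parts estimate of part 1 that gives $\int_{\{|u_\eps|>1\}}W'(u_\eps)^2\dx\le C\,\bar\alpha\,\eps^3=O(\eps^n)\cdot O(\eps^{3-n})$, bounded after dividing by $\eps^n$ when $n\le3$ (the same estimate also controls $\|\ol v_\eps\|_{L^2(B_2)}$ via $u^2\le 1+|W'(u)|$ on $\{|u|\ge1\}$). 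This is not a cosmetic correction: the Neumann hypothesis enters the $L^2$-data bound only through this estimate, and the counterexamples of Section \ref{section counterexamples} show that without such boundary control the conclusion genuinely fails, so a proof that does not establish and invoke it has not used the hypothesis where it is indispensable.
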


In particular, for regular boundaries, the Neumann condition implies the boundedness of solutions (in particular also on the boundary).

\begin{proof}
As before, we obtain
\begin{align*}
\alpha_\eps(\{&|u_\eps|>\theta'\}) = \int_{\{|u_\eps|>\theta'\}} \frac1\eps \left(\eps\,\Delta u_\eps - \frac1\eps\,W'(u_\eps)\right)^2\dx\\
		&= - \frac2\eps \int_{\partial\Omega\cap \partial\{|u_\eps|>\theta'\}}W'(u_\eps)\,\partial_\nu u_\eps\d\H^{n-1} - \frac2\eps \int_{\partial\{|u_\eps|>\theta'\}\cap \Omega}W'(u_\eps)\,\partial_\nu u_\eps\d\H^{n-1}\\
		&\qquad + \int_{\{|u_\eps|>\theta'\}} \eps\,(\Delta u_\eps)^2 + \frac2\eps\,W''(u_\eps)\,|\nabla u_\eps|^2 + \frac1{\eps^3}\,W'(u_\eps)^2\dx\\
	&\geq \int_{\{|u_\eps|>\theta'\}} \eps\,(\Delta u_\eps)^2 + \frac4\eps\,|\nabla u_\eps|^2 + \frac1{\eps^3}\,W'(u_\eps)^2\dx
\end{align*}
for any $\theta'>1$ such that $\{|u_\eps|>\theta'\}$ is a Caccioppoli set. Here the boundary integral can be split into two parts, one of which has a sign, while the other one vanishes due to the Neumann condition. This implies the boundedness on $\tilde\Omega_\eps$ and the bound on the mass measures $\mu_\eps(\{|u_\eps|>\theta'\})$ as before. We can take $\theta'\to 1$ to prove the first part of the Lemma.

Now assume that $\partial\Omega\in C^2$ and pick $x\in\partial\Omega$. The rest of the argument is a fairly standard `straightening the boundary' argument with the feature that the boundary becomes flatter as $\eps\to 0$. Without loss of generality, we assume that $x=0$. We may now blow up to 
\[
\tilde u_\eps : B_{2}(0)\cap (\Omega/\eps)\to \R, \qquad \tilde u_\eps(y) = u_\eps(\eps y).
\]
We pick a $C^2$-diffeomorphism $\phi_\eps:B_2(0)\to B_2(0)$ such that 
\begin{enumerate}
\item $\phi_\eps (\Omega/\eps \cap B_2(0))= B_2^+(0)$,
\item $\phi_\eps \to \id_{B_2(0)}$ in $C^2(B_2(0), B_2(0))$ as the domain becomes increasingly flat,
\item under $\phi_\eps$, the normal to $\partial\Omega/\eps$ gets mapped to $e_n$ on the boundary, i.e.\ the orthogonality condition is preserved.
\end{enumerate} 
With this we obtain a function 
\[
{\tilde w_\eps}:B_2^+(0)\to\R, \qquad \tilde w_\eps(y) = u_\eps(\phi_\eps^{-1}(y))
\]
in flattened coordinates. Since $\phi_\eps$ is $C^2$-smooth, it preserves $W^{2,2}$-functions and it is easy to calculate
\begin{align*}
\partial_i\tilde u_\eps &= \partial_i (\tilde w_\eps\circ\phi_\eps)\\
	&= \partial_i(\phi_{\eps})_j\,\left((\partial_j\tilde w_\eps) \circ \phi_\eps\right)\\
\partial_{ij}\, \tilde u_\eps &=\partial_{ij}(\phi_\eps)_k\,\left((\partial_k\tilde w_\eps) \circ \phi_\eps\right) + \partial_i(\phi_\eps)_k \,\partial_j(\phi_\eps)_l\,\left((\partial_{kl}\tilde w_\eps)\circ \phi_\eps\right).
\end{align*}
In shorter notation, this means that 
\[
\nabla \tilde u_\eps = D\phi\cdot\nabla \tilde w_\eps, \qquad \Delta \tilde u_\eps = a^{ij}_\eps\,\partial_{ij}\tilde w_\eps + \langle \Delta\phi_\eps,\nabla \tilde w_\eps \rangle 
\]
with
\[
a^{ij}_\eps = \langle \partial_i\phi_\eps,\partial_j\phi_\eps\rangle.
\]
The coefficients are $C^1$-differentiable -- so the associated operator $A_\eps$ can be equivalently written in divergence form -- and $C^1$-close to $\delta_{ij}$. We observe that 
\[
\left(\Delta \tilde u_\eps - W'(\tilde u_\eps)\right)(\phi(y)) = \left(\partial_i\left(a_\eps^{ij}\,\partial_j\tilde w_\eps\right) - (\partial_i\,a_\eps^{ij})\partial_j\tilde w_\eps + \langle \Delta\phi_\eps,\nabla\tilde w_\eps\rangle - W'(\tilde w_\eps) \right)(y).
\]
We extend $\tilde w_\eps$ by even reflection to the whole ball $B_2(0)$, which preserves the $W^{2,2}$-smoothness since we preserved the property that $\partial_\nu\tilde u_\eps =0$ on the boundary when straightening the boundary. We observe that 
\[
\partial_i\,(a_\eps^{ij}\partial_j\tilde w_\eps) - \langle \div A_\eps - \Delta \phi_\eps, \nabla \tilde w_\eps\rangle =: f_\eps \in L^2(B_2(0))
\]
since
\begin{align*}
\int_{B_2(0)}W'(\tilde w_\eps)^2\dy &= 2\int_{B_2^+(0)}W'(\tilde w_\eps)^2(y)\dy\\
	&= 2\int_{\Omega/\eps\cap B_2(0)} W'(\tilde u_\eps((z)))\,\det(D\phi^{-1}_\eps)(z)\dz\\
	&\leq 2(1+c_\eps) \int_{B_{2\eps}(x)}\frac1{\eps^n}W'(\tilde u_\eps)\dz\\
	&\leq C
\end{align*}
as shown above. The constants $c_\eps$ vanish as $\eps\to 0$ and  $\phi_\eps\to \id$. The coefficients $a_{ij}$ are uniformly elliptic and approach $\delta_{ij}$ uniformly as $\eps\to 0$, so we can employ the elliptic estimate
\begin{align*}
||\nabla \tilde w_\eps||_{L^2(B_{3/2})} &\leq C\left\{||\tilde w_\eps||_{L^2(B_2)} + ||f_\eps + \langle \div A_\eps - \Delta \phi_\eps, \nabla \tilde w_\eps\rangle||_{L^2(B_2)}\right\}\\
	&\leq C\left\{||\tilde w_\eps||_{L^2(B_2)} + ||f_\eps||_{L^2(B_2)} + ||\div A_\eps - \Delta \phi_\eps||_{L^\infty(B_2)}\, ||\nabla \tilde w_\eps||_{L^2(B_2)}\right\}.
\end{align*}
The constant is uniform in $\eps$ and $||\div A_\eps - \Delta \phi_\eps||_{L^\infty(B_2)}\to 0$ as $\eps\to 0$, so we can bring the term to the other side and obtain a uniform $W^{1,2}$-bound for all sufficiently small $\eps$, where the necessary smallness depends only on $\W_\eps(u_\eps)$ and $\partial\Omega$. In a second step, this gives us a uniform bound on $||\tilde w_\eps||_{W^{2,2}(B_1(0))}$, which gives us a uniform bound on $||\tilde u_\eps||_{W^{2,2}(B_{3/2}(0)\cap \Omega/\eps)}$ after transforming back. The rest follows by Sobolev embeddings as in \cite[Lemma 3.1]{MR3590663}.
\end{proof}

\begin{remark}
The case that $\Omega$ has finite perimeter and $\partial_\nu u_\eps=0$ almost everywhere on the reduced boundary is a generalisation of the situation in which $\partial\Omega\in C^2$ and the level sets of $u_\eps$ meet $\partial\Omega$ at a ninety degrees angle. Such conditions arise naturally when we search for surfaces of minimal perimeter bounding a prescribed volume and may be useful also for models containing Willmore's energy \cite{alessandroni2014local}.
\end{remark}

We give an improvement of the $L^\infty$-bound up to the boundary which implies $L^p$-convergence for all finite $p$.

\begin{lemma}\label{second regularity lemma}
Assume that there is $\theta\geq 1$ such that $|u_\eps|\leq\theta$ on $\partial\Omega$ for all $\eps>0$. Then the following hold true.

\begin{enumerate}

\item If $n=2$, $\partial\Omega\in C^{1,1}$ and $\theta>1$, then for every $\beta<1$ there exists a constant $C$ depending only on $\bar\alpha, \theta, \Omega$ and $\beta$ such that $\sup_{x\in \Omega} |u_\eps(x)|\leq \theta+ C\eps^\beta$ for all $\eps>0$.

If $\theta =1$, then for every $\beta<1/2$ there exists a constant $C$ depending only on $\bar\alpha, \Omega$ and $\beta$ such that $\sup_{x\in \Omega} |u_\eps(x)|\leq 1+ C\eps^\beta$ for all $\eps>0$.

\item If $n=3$ and $\partial\Omega\in C^{1,1}$, then for every $p<\infty$ there exists $C$ depending only on $\bar\mu,\bar\alpha, \theta, p$ and $\Omega$ such that $||u_\eps||_{p,\Omega}\leq C$. Furthermore, for every $\sigma>0$ there exists $C$ depending only on $\bar\alpha, \theta, \Omega$ and $\sigma$ such that $||u_\eps||_{\infty,\Omega}\leq C\,\eps^{-\sigma}$.
\end{enumerate}
\end{lemma}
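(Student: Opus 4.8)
\emph{The plan} is to run a De Giorgi--Stampacchia truncation on the superlevel sets of $u_\eps$, exploiting that beyond the level $\theta\ge1$ the nonlinearity $W'$ has a favourable sign and dominates a coercive zeroth--order term. Write $w_\eps := \eps\,\Delta u_\eps - \eps^{-1}W'(u_\eps)$, so that finite Willmore energy gives $\int_\Omega w_\eps^2\dx = c_0\,\eps\,\W_\eps(u_\eps)\le C\eps$, i.e.\ $\|w_\eps\|_{L^2(\Omega)}\le C\eps^{1/2}$, and $u_\eps$ solves $\eps^2\Delta u_\eps = W'(u_\eps)+\eps\,w_\eps$ almost everywhere. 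Since $|u_\eps|\le\theta$ on $\partial\Omega$ and $W^{2,2}$-functions are continuous up to the boundary for $n\le 3$, the truncation $v_k := (u_\eps-k)_+$ lies in $W_0^{1,2}(\Omega)$ for every $k\ge\theta$. Testing the equation with $v_k$ and integrating by parts (the boundary term vanishes because $v_k=0$ on $\partial\Omega$) yields the Caccioppoli--type identity
\[
\eps^2\int_\Omega|\nabla v_k|^2\dx + \int_{A_k}W'(u_\eps)\,v_k\dx = -\eps\int_{A_k}w_\eps\,v_k\dx, \qquad A_k:=\{u_\eps>k\},
\]
in which both terms on the left are nonnegative for $k\ge 1$; convexity of $W'$ on $[1,\infty)$ moreover gives $W'(u_\eps)\ge(3k^2-1)\,v_k$ on $A_k$. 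The lower side $\inf u_\eps$ is handled symmetrically through $(-u_\eps-k)_+$, using that $W'$ is odd.

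From this identity I first extract the two seed estimates that drive the iteration: combining with $\|w_\eps\|_{L^2}\le C\eps^{1/2}$ gives $\|v_k\|_{L^2}\le C\eps^{3/2}/(3k^2-1)$ and $\|\nabla v_k\|_{L^2}^2\le C\eps^{-1/2}\|v_k\|_{L^2}$. For the starting level the cases split. If $\theta>1$ then $W(\theta)>0$, so the bound $\mu_\eps(\{|u_\eps|\ge\theta\})\le C\eps^2$ from Lemma~\ref{third regularity lemma} controls the \emph{measure} $|A_\theta|\le C\eps^3$; this is the strong seed. If $\theta=1$ then $W(1)=0$ and only the \emph{$L^2$-norm} of the excess is controlled, namely $\|(u_\eps-1)_+\|_{L^2}^2\le C\eps^3$, a weaker seed which accounts for the loss in the final exponent.

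To close the iteration I use Sobolev--Poincar\'e: for $h>k$ one has $|A_h|\le(h-k)^{-2}\|v_k\|_{L^2}^2$, while $\|v_k\|_{L^{2^*}}\le C\|\nabla v_k\|_{L^2}$ with $2^*=2n/(n-2)$ (any finite exponent when $n=2$). Feeding these into the seed estimates produces a recursion $\varphi(h)\le M\,(h-k)^{-\gamma}\varphi(k)^\beta$ with $\beta>1$, where $\varphi$ is the measure $|A_k|$ (case $\theta>1$) or $\|v_k\|_{L^2}^2$ (case $\theta=1$), and $M$ carries a negative power of $\eps$ inherited from the factor $\eps^{-1}$ in the equation. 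Stampacchia's lemma then yields a level $k^*=\theta+d$ with $u_\eps\le k^*$, and tracking the $\eps$-powers gives $d\le C\eps^{1-6/q}$ for $\theta>1$, $n=2$ (hence any $\beta<1$ on letting $q\to\infty$), $d\le C\eps^{\beta}$ with $\beta<1/2$ for $\theta=1$, $n=2$, and $d\le C$ (boundedness) for $n=3$; boundedness in dimension three in particular gives the claimed $L^p$-bounds for all $p<\infty$ and the crude estimate $\|u_\eps\|_\infty\le C\eps^{-\sigma}$. The boundary regularity enters only through the Sobolev--Poincar\'e inequality and the trace statement $v_k\in W_0^{1,2}$, for which $C^{1,1}$ (indeed Lipschitz) is more than enough.

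\emph{The main obstacle} is defeating the factor $\eps^{-1}$ in the equation: a naive $L^2\to L^\infty$ estimate for the associated Yukawa/Poisson problem gives only the useless $C\eps^{-1/2}$, so the scheme works solely because the nonlinearity furnishes the coercive zeroth-order term $\sim\eps^{-2}v_k$ \emph{and} because the excess set is quantitatively small. Sharpness of the exponents therefore depends entirely on the quality of the seed, which is precisely where the dichotomy $\theta>1$ (measure control, $\beta<1$) versus $\theta=1$ (only $L^2$ control, $\beta<1/2$) comes from; the dimension enters through $2^*$, which fixes the superlinearity $\beta$ in the Stampacchia recursion and thereby decides whether the overshoot vanishes as $\eps\to0$ ($n=2$) or merely stays bounded ($n=3$).
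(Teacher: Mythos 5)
Your argument is correct, but it takes a genuinely different route from the paper's proof, and it actually delivers more. The paper linearises the problem: it shows $-\Delta(u_\eps-\theta')_+\le f_\eps$ with $f_\eps=\frac1\eps\left(h_\eps-\frac1\eps W'(\theta')\right)_+\chi_{\{u_\eps>\theta'\}}$, compares $(u_\eps-\theta')_+$ with the solution of $-\Delta\psi_\eps=f_\eps$, $\psi_\eps\in W^{1,2}_0(\Omega)$, via the weak maximum principle, estimates $\|f_\eps\|_{L^q}$ for $q<2$ by interpolating between $|\{u_\eps>\theta'\}|\lesssim\eps^3$ and $\|h_\eps\|_{L^2}^2\lesssim\eps$, and concludes through the Calder\'on--Zygmund bound $\|\psi_\eps\|_{W^{2,q}}\lesssim\|f_\eps\|_{L^q}$ (this is where $\partial\Omega\in C^{1,1}$ enters) and Sobolev embedding. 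You instead keep the nonlinear structure and run Stampacchia's iteration on the truncations themselves; the decisive difference is that your scheme re-uses the smallness of $|A_k|$ at \emph{every} level of the iteration, whereas the paper uses the smallness of the support of $f_\eps$ only once, inside the $L^q$ bound, and then discards it in the global elliptic estimate. In $n=2$ the two methods give identical rates, including the $\theta>1$ versus $\theta=1$ dichotomy: your exponents $1-6/q$ and $(1-6/q)/(2-4/q)\to 1/2$ are correct and reproduce $\beta<1$, $\beta<1/2$, and your two seeds mirror the paper's distinction between $W'(\theta')$ bounded below and $W'(\theta')=O(\eps^\beta)$. In $n=3$ your method is strictly stronger: with $\varphi(k)=|A_k|$ the Sobolev--Chebyshev step gives
\[
\varphi(h)\le \frac{C\,\eps^{-3}}{(h-k)^6}\,\varphi(k)^2,\qquad \varphi(k_0)\le C\,\eps^3,
\]
so the $\eps$-powers cancel exactly in Stampacchia's lemma and $\sup_\Omega u_\eps\le\theta+C$ uniformly --- a genuine $L^\infty$ bound, where the paper only obtains $L^p$ ($p<\infty$) together with $\|u_\eps\|_{\infty}\lesssim\eps^{-\sigma}$, and explicitly flags uniform three-dimensional boundedness as something it can only assume. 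The loss in the paper's route is structural: the global $W^{2,q}$ estimate runs into the borderline failure of $W^{2,3/2}(\R^3)\hookrightarrow L^\infty$, which your iteration never sees; and your conclusion does not contradict Section \ref{section counterexamples}, since the counterexamples to boundedness there have unbounded boundary data. Two points you should make explicit in a final write-up: first, $v_k\in W^{1,2}_0(\Omega)$ requires the trace argument --- $u_\eps\in W^{2,2}(\Omega)\subset C^0(\ol\Omega)$ for $n\le 3$ and $|u_\eps|\le\theta\le k$ on $\partial\Omega$ --- for which Lipschitz regularity of $\partial\Omega$ suffices, so you even weaken the hypothesis on the domain; second, for $\theta>1$ the seed $|A_\theta|\le C\eps^3$ at exactly the level $\theta$ genuinely needs Lemma \ref{third regularity lemma} (your own identity only seeds at levels $\theta+\delta$, which would spoil the form $\theta+C\eps^\beta$ of the conclusion), so keep that citation and spell out the intermediate H\"older/Sobolev manipulations that produce the displayed recursion.
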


We assume that also in three dimensions, uniformly bounded boundary values lead to uniform interior bounds.

\begin{proof}
The proof is a modified version of that of \cite[Proposition 3.6]{roger:2006ta}. We follow that proof closely, but use a different maximum principle.

Let $\theta'>\theta\geq 1$ such that $\{|u_\eps|>\theta'\}$ has finite perimeter and define $w_\eps\coloneqq  (u_\eps- \theta')_+$. Then $w_\eps\in W^{1,2}_0(\Omega)$ and from the same integration by parts as before we obtain that
\[
||w_\eps||_{1,2,\Omega}^2 \leq \int_{\{u_\eps>\theta'\}} W'(u_\eps)^2 + |\nabla u_\eps|^2 \leq \alpha_\eps(\Omega)\,\eps.
\]
The function satisfies 
\begin{align*}
\int_\Omega w_\eps\,(-\Delta\phi)\dx &= \int_{\{u_\eps>\theta'\}} (u_\eps-\theta')\,(-\Delta\phi)\dx \\
	&= -\int_{\partial\{u_\eps>\theta'\}} (u_\eps-\theta')\,\partial_\nu\phi \d\H^{n-1} + \int_{\{u_\eps>\theta'\}}\langle \nabla\phi, \nabla u_\eps\rangle\dx\\
	&= \int_{\partial \{u_\eps>\theta'\}}\phi\,\partial_\nu u_\eps - (u_\eps-\theta')\,\partial_\nu\phi\d\H^{n-1} + \int_{\{u_\eps>\theta'\}} \phi\,(-\Delta u_\eps)\dx\\
	&\leq \int_{\{u_\eps>\theta'\}} \phi\,(-\Delta u_\eps)\dx
\end{align*}
for $\phi\geq 0$. Again, this holds true because $\partial\{u_\eps>\theta'\}\subset\{u_\eps=\theta'\}$. Obviously
\begin{align*}
\int_{\{u_\eps>\theta'\}} \phi\,(-\Delta u_\eps)\dx &= \int_{\{u_\eps>\theta'\}} \left(-\Delta u_\eps + \frac1{\eps^2}W'(u_\eps) - \frac1{\eps^2}\,W'(u_\eps)\right)\,\phi\dx\\
	&\leq \int_{\{u_\eps>\theta'\}}\frac1\eps\,\left(h_\eps - \frac1\eps\,W'(\theta')\right)_+\phi\dx,
\end{align*}
so $-\Delta w_\eps \leq \frac1\eps\,\chi_{\{u_\eps>\theta'\}}\left(h_\eps - \frac1\eps\,W'(\theta')\right)_+ $ in the distributional sense. When we consider the solution $\psi_\eps \in W^{1,2}_0(\Omega)$ of the problem
\[
- \Delta\psi_\eps = \frac1\eps\,\left(h_\eps - \frac1\eps\,W'(\theta')\right)_+\chi_{\{u_\eps>\theta'\}},
\]
the weak maximum principle \cite[Theorem 8.1]{gilbarg:2001vb} applied to $w_\eps - \psi_\eps$ implies that
\begin{equation}\label{equation estimate u psi}
u_\eps \leq \theta + w_\eps \leq \theta + \psi_\eps.
\end{equation}
We proceed to estimate
\begin{align*}
||\Delta\psi_\eps||_{q,\Omega}^q &= \eps^{-q} \int_{\{u_\eps>\theta'\}}\left(h_\eps - \frac1\eps\,W'(\theta')\right)_+^q\dx \\
	& \leq \eps^{-q} \left(\int_{\{u_\eps>\theta'\}} 1\dx\right)^{1-q/2}\left(\int_\Omega\,h_\eps^2\dx\right)^{q/2}\\
	&\leq \eps^{-q} \left(\frac{\eps^3}{W'(\theta')^2}\int_{\{u_\eps>\theta'\}} \frac1{\eps^3}\,W'(u_\eps)^2\dx\right)^{1-q/2}\left(\eps\int_{\Omega}\frac1\eps\,h_\eps^2\dx\right)^{q/2}\\
	&\leq c_{\bar\alpha,q}\,(W'(\theta'))^{q-2}\,\eps^{-q + 3(1-q/2) + q/2}\\
	&=  c_{\bar\alpha,q} \,(W'(\theta'))^{q-2}\,\eps^{3 - 2q}
\end{align*}
for $1\leq q<2$. Thus $||\Delta\psi_\eps||_{q,\Omega}\leq  C_{\bar\alpha,q}\,(W'(\theta'))^{1-2/q}\,\eps^{3/q -2}$, and by the elliptic estimate \cite[Lemma 9.17]{gilbarg:2001vb}, we have
\[
||\psi_\eps||_{2,q,\Omega} \leq c_{\Omega,\bar\alpha,q}\,(W'(\theta'))^{1-2/q}\,\eps^{3/q-2}.
\]
Let us insert this estimate into \eqref{equation estimate u psi}. If $n=3$, we take $q=3/2$ and use that $W^{2,3/2}(\Omega)$ embeds into $L^p(\Omega)$ for all finite $p$. Thus (taking some $\theta'>1$ if $\theta=1$), we see that $u_\eps\leq \theta' + \psi_\eps$ where $\psi_\eps$ is uniformly bounded in $L^p(\Omega)$. We may use the same argument on the negative part of $u_\eps$, so in total $u_\eps$ is uniformly bounded in $L^p(\Omega)$ for all $1\leq p<\infty$ by domination through $\psi_\eps$. Taking $q=3/(2-\sigma)>3/2$ proves the $L^\infty$-estimate by the same comparison.

If $n=2$, we have a Sobolev embedding $W^{2,q} (\Omega) \to L^\infty(\Omega)$ for all $q>1$. Assuming that $\theta>1$ and $\beta<1$ we take $\theta'\to \theta$ to obtain 
\[
u_\eps \leq \theta + w_\eps \leq \theta + \psi_\eps \leq \theta + C_{\Omega,\bar\alpha,q}\,(W'(\theta))^{1-2/q}\, \eps^{3/q-2}.
\]
For $q = 3/(2+\beta)$, this gives $u_\eps\leq 1 + C\,\eps^\beta$. Here $q\in (1,2)$ is admissible since $\beta\in(0,1)$. If $\theta=1$, we may take $0 < \beta<1/2$, $q=(3-2\beta)/2 \in(1,2)$ and $1 + \eps^\beta \leq \theta' \leq 1 + 2\eps^\beta$ to obtain 
\[
|u_\eps|\leq 1 + C_{\Omega, \bar\alpha, q} \eps^{\beta(1-2/q) + (3/q-2)} = 1 + C_{\Omega, \bar\alpha, q}\eps^\beta
\]
with the approximation $W'(\theta') = O(\eps^\beta)$.
\end{proof}

\begin{corollary}
If $u_\eps \to u$ in $L^1(\Omega)$ and either
\begin{enumerate}
\item $u_\eps\in C^0(\ol\Omega)$ and there exists $\theta\geq1$ such that $|u_\eps|\leq\theta$ on $\partial\Omega$ for all $\eps>0$ or
\item $\partial\Omega\in C^2$ and $\partial_\nu u_\eps = 0$ a.e.\ on $\partial\Omega$,
\end{enumerate}
then $u_\eps\to u$ in $L^p(\Omega)$ for all $1\leq p<\infty$.
\end{corollary}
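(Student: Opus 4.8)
The plan is to upgrade the given $L^1(\Omega)$-convergence to $L^p(\Omega)$-convergence by combining the uniform integrability estimates supplied by the preceding lemmas with a single interpolation inequality; the real content lies in the lemmas, and what remains is to package it correctly.

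First I would record the relevant uniform bound in each case. Under hypothesis (1), Lemma \ref{second regularity lemma} yields a uniform $L^\infty(\Omega)$-bound if $n=2$ and a uniform $L^q(\Omega)$-bound for every finite $q$ if $n=3$. Under hypothesis (2), since $\partial\Omega\in C^2$, the sharpened estimate (2') of Lemma \ref{fourth regularity lemma} applies and gives $|u_\eps(x)|\le C$ for all $x\in\ol\Omega$ with $C$ independent of $\eps$, hence a uniform $L^\infty(\Omega)$-bound. In every case, then, for each target exponent $p<\infty$ there is some $q>p$ (with $q=\infty$ admissible) and a constant $C$ with $\sup_{\eps}\|u_\eps\|_{q,\Omega}\le C$.

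Next I would transfer this bound to the limit. Since $u_\eps\to u$ in $L^1(\Omega)$, a subsequence converges $\L^n$-almost everywhere, so Fatou's lemma gives $\|u\|_{q,\Omega}\le\liminf_{\eps}\|u_\eps\|_{q,\Omega}\le C$; in particular $u\in L^q(\Omega)$ and the differences $u_\eps-u$ are uniformly bounded in $L^q(\Omega)$. Now fix $1\le p<q$ and pick $\lambda\in(0,1]$ with $\tfrac1p=\lambda+\tfrac{1-\lambda}{q}$. The interpolation inequality
\[
\|u_\eps-u\|_{p,\Omega}\le\|u_\eps-u\|_{1,\Omega}^{\lambda}\,\|u_\eps-u\|_{q,\Omega}^{1-\lambda}
\]
then closes the argument: the first factor tends to $0$ by hypothesis, while the second stays bounded by $C^{1-\lambda}$. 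As $p<\infty$ was arbitrary and $q$ can always be chosen strictly larger than $p$ (finite or $\infty$), this yields convergence in $L^p(\Omega)$ for all $1\le p<\infty$.

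There is no genuine obstacle here once the uniform bounds are cited; the argument is essentially routine. The only point that deserves a moment's attention is the case $n=3$ under hypothesis (1), where no uniform $L^\infty$-bound is available, so one cannot interpolate against $L^\infty$. Instead one must exploit that Lemma \ref{second regularity lemma} provides uniform $L^q$-bounds for \emph{every} finite $q$, and simply choose $q$ slightly larger than the prescribed target $p$ so that the interpolation still applies.
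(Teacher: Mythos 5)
Your proposal is correct and takes essentially the same route as the paper, whose proof is exactly this: the uniform $L^q(\Omega)$-bounds (all $q<\infty$, or even $L^\infty(\Omega)$) from Lemmas \ref{fourth regularity lemma} and \ref{second regularity lemma} combined with H\"older interpolation against the given $L^1(\Omega)$-convergence. Your Fatou step, ensuring $u\in L^q(\Omega)$ so that $\|u_\eps-u\|_{q,\Omega}$ is actually finite and uniformly bounded, only makes explicit a detail the paper leaves implicit.
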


\begin{proof}
The sequence $u_\eps$ converges to $u$ in $L^1(\Omega)$ and is bounded in $L^q(\Omega)$ for all $q<\infty$ (or even $L^\infty(\Omega)$). H\"older's inequality implies $L^p$-convergence.
\end{proof}

\begin{remark}
If $n=2$, $\beta<1/2$ and $|u_\eps|\leq 1+\eps^\beta$ on $\partial\Omega$, then the proof still shows that
\[
\sup_{\Omega} |u_\eps|\leq 1+ C\,\eps^\beta
\]
for this particular $\beta$. The case $\beta = 1/2$ is still open at the boundary.
\end{remark}

For a counterexample to uniform boundedness on $\Omega$ without boundary conditions, see Example \ref{counterexample 1}. Even with boundary values satisfying $|u_\eps|\leq 1$ on $\partial\Omega\in C^2$, we shall construct a sequence $u_\eps$ for which uniform H\"older continuity fails at the boundary in Example \ref{counterexample 2}.

\section{Counterexamples to Boundary Regularity}\label{section counterexamples}

The idea here is simple: namely, the energy $\W_\eps$ can be seen to control the $W^{2,2}$-norm of blow ups of phase-fields onto $\eps$-scale since those are asymptotic to bounded entire solutions of the stationary Allen-Cahn equation $-\Delta \tilde u + W'(\tilde u) = 0$ at (almost all) points away from the boundary. At the boundary on the other hand, the asymptotic behaviour corresponds to solutions of the same equation on half-space, whose behaviour is essentially governed by their boundary values. To make this precise, take $h\in C_c^\infty(\R^n)$ and $H\coloneqq  \{x_n>0\}$. The energy
\[
\F\colon W^{1,2}_{loc}(H)\to \R\cup\{\infty\}, \quad \F(u) = \int_H\frac12\,|\nabla u|^2 + W(u)\dx
\]
has a minimiser $\tilde u$ in the affine space $(1+h) + W^{1,2}_0(H)$ by the direct method of the calculus of variations. Namely, take a sequence $u_k$ such that $\lim_{k\to\infty}\F(u_k) = \inf\F(u)  \leq \F(h+1) < \infty$. Then
\[
||\nabla u_k||_{L^2(H)}\leq C, \qquad\text{and}\quad (u_k-1)^2(x) \leq (u_k-1)^2(u_k+1)^2(x) = 4\,W(u_k(x))
\]
at all points $x\in H$ such that $u_k(x)\geq 0$. Using the boundary values, also the negative part of $u_k$ is uniformly controlled in $L^2(H)$ by the $H^1$-semi norm. Thus the sequence $u_k$ is bounded in $W^{1,2}(H)$ and there exists $\tilde u$ such that $u_k\wto \tilde u$ (up to a subsequence).  Since the affine space is convex and strongly closed, it is weakly = weakly* closed and $\tilde u\in 1+h+W_0^{1,2}(H)$. For any $R>0$, we can use the compact embedding $W^{1,2}(B_R^+)\to L^4(B_R^+)$ to deduce that 
\[
\int_{B_R^+}\frac12\,|\nabla \tilde u|^2 + W(\tilde u)\dx \leq \liminf_{k\to\infty} \int_{B_R^+}\frac12\,|\nabla u_k|^2 + W(u_k)\dx \leq \liminf_{k\to\infty} \int_{H}\frac12\,|\nabla u_k|^2 + W(u_k)\dx.
\]
Letting $R\to\infty$ shows that $\tilde u$ is in fact a minimiser of $\F$. If $h\geq 0$, then
\[
1 + (\tilde u-1)_+\in 1 + h + W^{1,2}_0(H), \qquad \F\left( 1 + (\tilde u-1)_+ \right) \leq \F(\tilde u)
\]
with strict inequality unless $\tilde u = 1 + (\tilde u-1)_+$. Since we assume $\tilde u$ to be a minimiser, we find that $\tilde u\geq 1$ almost everywhere. The same argument shows that $\tilde u\leq 1 + ||h||_\infty$ almost everywhere. Calculating the Euler-Lagrange equation of $\F$, we see that $\tilde u$ is a weak solution of
\[
-\Delta\tilde u + W'(\tilde u) = 0.
\]
On the convex set
\[
C_h\coloneqq  \{ u\in W^{1,2}(H) \:|\: u = 1+h\text{ on }\partial H, u\geq 1\}
\]
the operator
\[
A\colon C_h\to W^{-1,2}(H), \quad A(u) = -\Delta u + W'(u)
\]
is well-defined (since $n\leq 3$ and $W'$ has cubical growth) and strongly monotone, so the equation $Au=0$ has a unique solution $\tilde u\in C_h$ which coincides with the minimiser $\tilde u$ of $\F$ in $1+h+ W_0^{1,2}(H)$). A bootstrapping argument via elliptic regularity theory shows that $\tilde u\in C^\infty_{loc}(\overline H)$. By trace theory we have that
\[
||h||_{2, \partial H}^2 \: =  \: ||\tilde u - 1||_{2,\partial H}^2 \:\leq \: ||\tilde u-1 ||_{1,2,H}^2 /2  \:\leq \:\F(\tilde u)\: \leq \: \F(1+h) .
\]
In this way, we can fully control the mass density $\tilde\mu = \frac12\,|\nabla \tilde u|^2 + W(\tilde u)$ created by $\tilde u$ in terms of its boundary values. For later purposes, we have to obtain suitable decay estimates for the functions $\tilde u$ depending on $h$. In a first step, we show that the limit $\lim_{|x|\to\infty} \tilde u(x) = 1$ exists. Assume the contrary. Then there exist $\theta>1$ and a sequence $x_k\in H$ such that 
\[
|x_k|\to\infty, \qquad \tilde u(x_k) \geq \theta.
\]
Taking a suitable subsequence, we may assume that the balls $B_1(x_k)$ are disjoint and $|x_k|\geq R+2$ is so large that $h$ is supported in $B_R(0)$. If $B_2(x_k)\subset H$, we may proceed as in Lemma \ref{regularity lemma} to deduce uniform H\"older continuity on the balls $B_1(x_k)$ from the $L^\infty$-bound to $\tilde u$ and the fact that $\tilde u$ solves $\Delta\tilde u =  W'(\tilde u)$. This means that there exists $r>0$ such that $\tilde u \geq (1+\theta)/2$ on $B_r(x_k)$. Otherwise, the same argument still goes through after extending $\tilde u$ by a standard reflection principle and the fact that the boundary values are constant on $\partial H\cap B_2(x_k)$. The geometry of $H$ gives us $\L^n(B_r(x_k)\cap H)\geq \omega_n\,r^n/2$. So we deduce that
\[
\F(\tilde u) \geq \sum_{k=0}^\infty \int_{B_r(x_k)} W(\,(1+\theta)/2)\dx \geq \sum_{k=0}^\infty \,W((1+\theta)/2) \,\omega_n\,r^n/2= \infty
\]
in contradiction to the definition of $\tilde u$. Now we can estimate the decay of $\tilde u$ in a more precise fashion. Since $h\in C_c(\partial H)$, there is $C_h>0$ such that $h\leq C_h\,e^{-|x|}$ on $\partial H$. To simplify the following calculations, we assume that $C_h=1$. Then we claim that $1\leq u\leq 1+ e^{-|x|}$ for all $x\in \R^n$. Assume the contrary and observe that $\psi(x) = 1+ e^{-|x|}$ satisfies
\[
\Delta\psi(x) = \left(1 + \frac{1-n}{|x|}\right)\,e^{-|x|},\qquad W'(\psi(x)) = \left(2 + 3\,e^{-|x|} + e^{-2\,|x|}\right)\,e^{-|x|},
\]
so in particular $\Delta \psi(x) \leq W'(\psi(x))$ for all $x\in \R^n$. Since $\tilde u=h \leq \psi$ on $\partial H$ by assumption and $\lim_{|x|\to \infty}\tilde u(x) = 1$, there must be a point $x_0\in H$ such that
\[
(\psi - u)(x_0) = \min_{H} (\psi-u) < 0,
\]
but then
\[
\Delta(\psi - u)(x_0) \leq W'(\psi(x_0)) - W'(u(x_0)) < 0
\]
so $\psi - u$ cannot be minimal at $x_0$. This proves the claim. It follows that
\[
\int_{H\setminus B_R^+}W(\tilde u) \dx \leq 2\,\int_R^\infty e^{-2r}\,r^{n-1}\d r = P_n(R)\,e^{-2R}
\]
where $P_n$ is a polynomial of degree $n$ depending on the dimension. To estimate the second part of the energy functional, we use the gradient bound
\[
|\nabla u(x)| \leq n\,\sqrt{n}\,\sup_{\partial Q} |u| + \frac12\,\sup_Q |\Delta u|
\]
from \cite[Section 3.4]{gilbarg:2001vb} where $Q$ is a cube of side length $d=1$ with a corner at $x$. Applied to our problem, for $x\in \partial B_R^+$ we can find a cube $Q$ satisfying $\bar Q \cap \bar {B_R^+} = \{x\}$ such that
\[
|\nabla \tilde u(x)| \: =\: |\nabla (\tilde u-1)|(x) \leq n\,\sqrt{n}\,\sup_{\partial Q} |\tilde u -1 | + \frac12\,\sup_Q |W'(\tilde u)| \:\leq\: (n\,\sqrt{n} + 5/2)\,e^{-|x|}.
\]
Thus we also have
\[
\int_{H\setminus B_R^+}\frac12\,|\nabla \tilde u|^2\dx \:\leq\: \left(n\,\sqrt{n} + 5/2\right)^2\,P_n(R)\,e^{-2R}
\]
Finally, we remark that the same type of estimate obviously holds for $\Delta\tilde u = W'(\tilde u) \in L^2(H)$. Having given the general construction for suitable functions of zero $\W_1$ curvature energy, we are finally ready to apply these results to obtain counterexamples. For simplicity, we construct the counterexamples first on the half space $H$ and transfer them to bounded $\Omega$ later on.

\begin{example}[Counterexample to Boundedness]\label{counterexample 1}
Fix $h\in C_c^\infty(\R^n)$ such that $0\leq h\leq e^{-|x|}$, $h\not\equiv 0$ and set $h_\theta = \theta\,h$. Every function of this type induces a minimiser $\tilde u_\theta$. We may take a sequence $\theta_\eps\to \infty$ such that $\eps^{n-1}/\theta_\eps^{\,4} \to 0$ and set $u_\eps(x) = \tilde u_{\theta_\eps}(x/\eps)$. Clearly, $u_\eps$ becomes unbounded as $\eps\to 0$, but

\begin{enumerate}
\item $\W_\eps(u_\eps)\equiv 0$ and
\item $S_\eps(u_\eps) = \eps^{n-1}\,\F(\tilde u_{\theta_\eps}) \leq C\,\eps^{n-1}\,\F(h_{\theta_\eps})\to 0$.
\end{enumerate}
So the sequence $u_\eps$ induces limiting measures $\mu = \alpha = 0$, but fails to be uniformly bounded.
\end{example}

The  next example is a technically more demanding version of this one where the energy scaling is chosen so that we create an atom of size $S>0$ at the origin. 

\begin{example}[Counterexample to Boundary Regularity of $\mu$]
Take $h_\theta, \tilde u_\theta$ as above. Then the map
\[
f\colon [0,\infty) \to \R, \quad f(\theta) = \F(\tilde u_{\theta}) = \inf \{\F(u)\:|\:u\in 1+ h_{\theta} + W^{1,2}_0(H)\}
\]
is continuous. To see this, take pairs $\theta_1$, $\theta_2$ and the corresponding minimisers $\tilde u_1$, $\tilde u_2$ and observe that
\[
\tilde u_{1,2} = \frac{\theta_2}{\theta_1}\, \left[\tilde u_1 - 1\right] + 1 \:\:\in \:1 + h_{\theta_2} + W^{1,2}_0(H).
\]
Since
\[
W(1+\alpha u) = ((1+\alpha u)^2 -1)^2 /4 = (2\alpha u + \alpha^2 u^2)^2/4 \leq \max\{\alpha^2, \alpha^4\} W(1+u)
\]
we have
\[
f(\theta_2) = \F(\tilde u_2) \leq \F(\tilde u_{1,2}) \leq \max\left\{\left(\frac{\theta_2}{\theta_1}\right)^2, \:\left(\frac{\theta_2}{\theta_1}\right)^4\right\}\,\F(\tilde u_1) = \max\left\{\left(\frac{\theta_2}{\theta_1}\right)^2, \:\left(\frac{\theta_2}{\theta_1}\right)^4\right\}\,f(\theta_1).
\]
Reversing the roles of $\theta_1$ and $\theta_2$ shows that $f$ is continuous. Now let $S>0$. Due to the continuity of $f$ in $\theta$ and the trace inequality 
\[
\theta^2||h||_{2,\partial H}^2 = ||h_\theta||_{2,\partial H}^2 \leq \F(\tilde u_\theta)
\]
we can pick a sequence $\theta_\eps\to\infty$ at most polynomially in $1/\eps$ such that $\F(\tilde u_{\theta_\eps}) = S\,\eps^{1-n}$. As before, set $u_\eps(x) = \tilde u_{\theta_\eps}(x/\eps)$ and observe that $\W_\eps(u_\eps)\equiv 0$, $S_\eps(u_\eps)\equiv S$. It remains to show that $\mu = S\,\delta_0$, i.e.\ that the limiting measure is concentrated in one point. The functions $\tilde u_{\theta}$ actually tend to shift more of their mass towards the origin as $\theta\to \infty$ since the steepness (and overall height) is best concentrated on a ball of small radius for a low energy.

The same application of the maximum principle as before shows that $\tilde u_\theta \leq \tilde w_\theta \coloneqq  1 + \theta(\tilde u_1 -1)$ since 
\[
\Delta(\tilde w_\theta - \tilde u_\theta) = \theta\,\Delta \tilde u_1 - \Delta \tilde u_\theta = \theta\,W'(\tilde u_1) - W'(\tilde u_\theta)  \leq W'(\tilde w_\theta ) - W'(\tilde u_\theta)
\]
is monotone in $\tilde w_\theta$, $\tilde u_\theta$ and the boundary values satisfy $\tilde u_\theta = \tilde w_\theta$ on $\partial H$ and $\lim_{|x|\to\infty}\tilde u_\theta = \lim_{|x|\to\infty} \tilde w_\theta = 1$. Like above, we now obtain that
\[
\int_{H\setminus B_R^+}\frac12\,|\nabla \tilde u_\eps|^2 + W(\tilde u_\eps)\dx \leq \max\{\theta_\eps^2, \theta_\eps^4\}\,P_n(R)\,e^{-2R}.
\]
Thus we can choose a sequence $R_\eps\to \infty$ such that $\theta_\eps^4\, P_n(R_\eps)\,e^{-2R_\eps}\to 0$ and $\eps\,R_\eps\to 0$ since $\theta_\eps$ grows only polynomially in $1/\eps$ and the exponential term dominates (take e.g.\ $R_\eps = \eps^{-1/2}$). Thus for all $R>0$
\[
\mu_\eps(B_R(0)) = \eps^{1-n}\int_{B_{R/\eps}^+} |\nabla\tilde u_{\theta_\eps}|^2 + W(\tilde u_{\theta_\eps})\dx \geq \eps^{1-n} \int_{B_{R_\eps}^+} |\nabla\tilde u_{\theta_\eps}|^2 + W(\tilde u_{\theta_\eps})\dx \to S
\]
and hence $\mu(B_R(0))\geq S$. Taking $R\to 0$ shows that $\mu(\{0\}) = \mu(\overline H) = S$, i.e.\ $\mu = S\,\delta_0$. 
\end{example}

Functions as described above can appear as minimisers of functionals like $\W_\eps + \eps^{-1}\,(S_\eps - S)^2$ which are used to search for minimisers of Willmore's energy with prescribed surface area -- even as functions with energy zero. The same is true for functionals including the topological penalisation term discussed below. 

By construction, the previous example shows that the inclusion $\spt(\mu)\subset \lim_{\eps\to 0}u_\eps^{-1}(I)$ need not be true for any $I\cc (-1,1)$ since $u_\eps\geq 1$ and thus $K=\emptyset$. We use a similar construction to demonstrate that the reverse inclusion need not hold, either.

\begin{example}[Counterexample to Hausdorff Convergence]\label{counterexample 2}
Using the same arguments as above, if $0\leq h\leq 2$, we can find a solution $\tilde u \in (1-h) + W^{1,2}_0(H) \cap C^\infty_{loc}(\overline H)$ of 
\[
-\Delta \tilde u + W'(\tilde u) = 0\quad\text{in }H, \qquad \bar u = 1-h\quad\text{on }\partial H
\]
satisfying $-1\leq \tilde u\leq 1$, $\lim_{|x|\to\infty} \tilde u(x) = 1$ and $\F(\tilde u) \leq \F(1+h)<\infty$. Decay estimates are harder to obtain here since $W'$ is not monotone inside $[-1,1]$, but we will not need them, either. If we take $h$ such that $h(0) =2$, $h\in C_c^\infty(B_1)$, we can use continuity up to the boundary to deduce that $\tilde u\inv(\rho)\cap B_1^+ \neq \emptyset$ for all $\rho\in(-1,1)$. So when we set $u_\eps(x) = \tilde u(x/\eps)$, we see that

\begin{enumerate}
\item $\mu_\eps(H) = \eps^{n-1}\,\tilde \mu(H) = \eps^{n-1}\,\F(\tilde u)\to 0$,

\item $\W_\eps(u_\eps)\equiv 0$ and

\item $0\in \lim_{\eps\to 0}u_\eps^{-1}(I)$ in the Hausdorff sense for all $\emptyset\neq I\cc (-1,1)$.
\end{enumerate}
\end{example}

\begin{example}[Counterexample to Uniform H\"older Continuity]\label{counterexample 3}
If we take $h$ like in the previous example and replace it by $h^\omega(x)= h(\omega x)$ we observe that the associated minimisers satisfy
\[
\F(\tilde u^\omega) \leq \F(h^\omega) \leq \F(h)
\]
for all $\omega\geq 1$ since the gradient term stays invariant in two dimensions and decreases in three, while the integral of the double well potential decreases in both cases for any fixed $h$. Thus, if we take any sequence $\omega_\eps\to \infty$ and define $u_\eps(x) = \tilde u^{\omega_\eps}(x/\eps)$, we get the same results as before. As the function becomes steeper and steeper on the boundary faster than $\eps$, uniform H\"older continuity up to the boundary cannot hold, even for uniformly bounded boundary values.
\end{example}

\begin{example}[Counterexample to Boundary Regularity of $\mu$ with $-1<u_\eps<1$]
We can refine the examples to show that growth of $u_\eps$ on $\partial\Omega$ is not the only reason that $\mu$ might develop atoms on $\partial\Omega$, but that this is in fact possible with $|u_\eps|\leq 1$. This happens when we prescribe highly oscillating boundary values on $\partial H$. Let $h\in C_c^\infty(\partial H)$, then for any $u\in H^1(H)$ with $u|_{\partial H}=g$ we have 
\[
\int_H|\nabla u|^2\dx \geq [h]_{H^{1/2}(\partial H)}^2 = c_{n-1} \int_{\partial H\times \partial H}\frac{|h(x)-h(y)|^2}{|x-y|^{n+1}}\dx\dy.
\]
for a constant depending on the dimension $n-1\in\{1,2\}$. For any $S'>0$ and $\delta>0$ we can construct $h\in C^\infty (H)$ such that 
\begin{enumerate}
\item $0\leq h\leq\delta$,
\item $\supp(h) \subset B_1(0)$ and
\item $[h]_{H^{1/2}}^2 \geq S'$.
\end{enumerate}
We construct a solution of the stationary Allen-Cahn equation with the boundary values $1-h$ as before, but for a modified potential
\[
\ol W(s) = \begin{cases}W(1-2\delta) &s\leq 1-2\delta\\
	W(s) &s\geq 1-2\delta\end{cases}.
\]
An energy minimiser will never dip below $1-2\delta$ then, and consequently never below $1-\delta$ by the maximum principle if $\delta$ is chosen so small that $W'$ is monotone on $[1-2\delta, \infty)$. The rest of the proof goes through as before with suitable scaling of $h$ to get the right energy since $W'$ behaves correctly just below $1$, as it does slightly above $1$. We will not repeat the details.

The boundary values need to be constructed with slightly more care since we cannot just have vertical growth and the $H^{1/2}$-norm behaves badly under spacial scaling. This is compensated in the boundary construction by having a larger number of faster oscillations. When we have constructed $h$ with a large enough half-norm, we can always reduce it by scaling with a constant $<1$.
\end{example}
  
For the sake of simplicity, we chose to construct the examples on half space due to its scaling invariance. Let us sketch how they can be transferred to $C^2$-domains. If $\Omega\cc\R^n$ and $\partial\Omega\in C^2$ there exists $x_0\in \partial\Omega$ such that $|x_0| = \max_{x\in\partial\Omega}|x|$. At $x_0$, both principal curvatures of $\partial\Omega$ are strictly positive, so in a ball around $x_0$, up to a rigid motion we may write
\[
\Omega\cap B_r(x_0) = \{x\in B_r(x_0)\:|\:x_n>\phi(\hat x)\}
\]
where $\hat x = (x^1,\dots,x^{n-1})$ and $\phi$ is a strictly convex $C^2$-function satisfying $\phi(0)=0$, $\nabla \phi(0) = 0$ and $\Omega\subset H$. If $\Omega$ is convex in the first place, this is possible at every point $x_0\in \partial\Omega$. 

Thus, the function $u_\eps(x) = \tilde u(x/\eps)$ is well-defined on $\Omega$ for any of the functions $\tilde u$ constructed above. If $\eps$ is chosen small enough, the difference between $H$ and $\Omega/\eps$ becomes negligible for any given $\tilde u$ and we can still construct counterexamples to boundedness, local H\"older-continuity, relationship between $\spt(\mu)$ and the Hausdorff limit of the level sets and to the regularity of $\mu$ this way.

Using the exponential decay (or modifying functions to become constant for larger arguments) it is also possible to create singular behaviour for example along curves in the convex portion of the boundary by placing singular solutions of the stationary Allen-Cahn equation at an increasing number of points distributed along the curve.

We restricted our analysis to convex boundary points since then $u_\eps = \tilde u_\theta(x/\eps)$ is well-defined for all small $\eps>0$, whereas at other points, half space does not provide enough information to fill an entire neighbourhood of $x_0$. We believe that the same pathologies can arise at general boundary points.

\end{document}